\newcommand{\Z}{{\mathbb Z}}
\newcommand{\C}{{\mathbb C}}
\newcommand{\R}{{\mathbb R}}
\newcommand{\CC}{{\mathcal C}}
\DeclareMathOperator{\rank}{rank}
\DeclareMathOperator{\id}{id}
\newtheorem{thm}{Theorem}[section]
\newtheorem{proposition}[thm]{Proposition}
\newtheorem{lemma}[thm]{Lemma}
\newtheorem{remark}[thm]{Remark}
\newtheorem{conj}[thm]{Conjecture}
          {\theoremstyle{definition}
}
          {\theoremstyle{definition}

}
 \numberwithin{equation}{section}
\tikzset{%
  add/.style args={#1 and #2}{to path={%
 ($(\tikztostart)!-#1!(\tikztotarget)$)--($(\tikztotarget)!-#2!(\tikztostart)$)%
  \tikztonodes}}
} 
\newcommand{\comment}[1]{}
\begin{document}
\title[]{Complex conjugation and simplicial algebraic hypersurfaces}

\author[Charles Arnal]{Charles Arnal}

\address{Charles Arnal, Univ. Paris 6, IMJ-PRG, France.}
\email{charles.arnal@imj-prg.fr} 

\maketitle

\begin{abstract}
We call a real algebraic hypersurface in $(\C^*)^n$\textit{simplicial} if it is given by a real Laurent polynomial in $n$-variables that has exactly $n+1$ monomials with non-zero coefficients and such that the convex hull in $\R^n$ of the $n+1$ points of $\Z ^n$ corresponding to the exponents is a non-degenerate $n$-dimensional simplex.
Such hypersurfaces are natural building blocks from which more complicated objects can be constructed, for example using O. Viro's Patchworking method. Drawing inspiration from related work by G. Kerr and I. Zharkov \cite{KerrZharkov}, we describe the action of the complex conjugation on the homology of the coamoebas of simplicial real algebraic hypersurfaces, hoping it might prove useful in a variety of problems related to topology of real algebraic varieties. In particular, assuming a reasonable conjecture inspired by \cite{KerrZharkov}, we identify the conditions under which such a hypersurface is Galois maximal.

\end{abstract}

\tableofcontents

\newcommand\blfootnote[1]{%
  \begingroup
  \renewcommand\thefootnote{}\footnote{#1}%
  \addtocounter{footnote}{-1}%
  \endgroup
}
\blfootnote{ This research was supported by the DIM Math Innov de la R\'egion Ile-de-France.
\begin{center}
\includegraphics[width = 20mm]{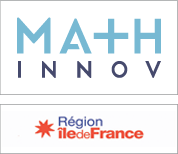} 
\end{center}  }

\section*{Acknowledgement}
The author is very grateful to Ilia Itenberg, Oleg Viro and Ilia Zharkov for helpful discussions.

\section{Introduction}

Any non-constant real Laurent polynomial $P$ in $n$ variables defines a real algebraic hypersurface $X_P$ in the complex torus $(\C^*) ^n$. Given such a polynomial $P(z)=\sum_{\lambda = ( \lambda_1,\ldots,\lambda_n) \in \Lambda } c_\lambda z_1^{\lambda_1},\ldots, z_n^{\lambda_n} $, where $\Lambda$ is a finite subset of $\Z ^n$ and $c_\lambda \in \R^*$ for all $\lambda \in \Lambda$, we call the convex hull in $\R^n$ of $\Lambda$ the \textit{Newton polytope} of $P$.

If $P$ has exactly $n+1$ monomials with non-zero coefficients and if the Newton polytope $\Delta _P$ of $P$ is a non-degenerate $n$-dimensional simplex, we call $P$ a \textit{simplicial} real polynomial and $X_P$ a \textit{simplicial} real algebraic hypersurface. In particular, the monomials of $P$ and the vertices of $\Delta_P$ are in bijection.

Let $X$ be a simplicial real algebraic hypersurface.
We denote the real (respectively, complex) points of $X$ in $(\R^*) ^n$ (respectively, $(\C^*)^n$) by $\R X$ (respectively, $\C X$).
The complex conjugation $c$ on the complex torus naturally acts on $\C X$. Our goal here is to better understand this action and the induced action $c_*$ on the homology $H_*(\C X)$ of $\C X$ with coefficients in $\Z_2:=\Z/2\Z$ (unless otherwise specified, we always consider homology with coefficients in $\Z_2$).

Simplicial real algebraic hypersurfaces play a key role in Viro's Patchworking method, in particular in the combinatorial case, where varieties with prescribed topology are obtained by gluing together simplicial real algebraic hypersurfaces. Viro's method is one of the main tools in the study of topology of real algebraic varieties, and is deeply connected to tropical geometry. It was used by Viro to disprove the \textit{Ragsdale conjecture} regarding real algebraic curves in \cite{Viro80}, and later in \cite{Itenberg93} by I. Itenberg to show that the conjecture was also asymptotically wrong.  A complete exposition of the method can be found in \cite{ViroPatchworking} .

Hence, in addition to improving our understanding of simplicial real algebraic hypersurfaces, which are among the most natural and simple examples of real algebraic varieties, our results might also lead to a better comprehension of the varieties obtained using the combinatorial case of Viro's Patchworking.

In particular, A. Renaudineau and K. Shaw proved in \cite{RS} a bound conjectured by Itenberg, which states that in the case of a real algebraic projective hypersurface $X \subset \R \mathbb{P}^n$ obtained via \textit{primitive} Patchworking, 
\begin{equation}\label{BorneKristinArthur1}
    \dim_{\Z _2}(H_q(\R X)) \leq h^{q,n-1-q}(\C X) +1
\end{equation}
if $q\in \{0,\ldots,n\}$ and $q\neq \frac{n-1}{2}$, and
\begin{equation}\label{BorneKristinArthur2}
    \dim_{\Z _2}(H_\frac{n-1}{2}(\R X)) \leq h^{\frac{n-1}{2},\frac{n-1}{2}}(\C X)
\end{equation}
if $n-1$ is even, where $h^{q,p}(\C X)$ is the $(q,p)$-th Hodge number of $\C X$. This can be compared to the much looser \textit{Smith-Thom inequality}, valid for all real algebraic varieties, which states that 
\begin{equation}\label{SmithThom}
    \dim_{\Z _2}(H_*(\R X)) \leq \dim_{\Z _2}( H_*(\C X) ).
\end{equation}
Taking the sum of inequalities (\ref{BorneKristinArthur1}) and (\ref{BorneKristinArthur2}) over all $q\in \{0,\ldots,n\}$ yields the Smith-Thom inequality. A real algebraic variety for which (\ref{SmithThom}) is an equality is called \textit{Smith-Thom maximal}, or simply \textit{maximal}. They are objects of particular interest.

A primitive Patchwork is a Patchwork obtained by gluing together simplicial real algebraic hypersurfaces whose Newton polytopes are of Euclidean volume $\frac{1}{n!}$ (more details can be found in \cite{RS}). Such hypersurfaces are the simplest possible objects from which to construct more complicated hypersurfaces; in particular, they are maximal, which is a key ingredient in the proof of inequalities (\ref{BorneKristinArthur1}) and (\ref{BorneKristinArthur2}).

The \textit{Borel-Swan inequality}, which implies the Smith-Thom inequality,  states that
\begin{equation*}
    \dim_{\Z _2}(H_*(\R X)) \leq \dim_{\Z _2}\left( \frac{Ker(1+c_*)}{Im(1+c_*)} \right),
\end{equation*} where $1+c_* : H_*(\C X) \longrightarrow H_*(\C X)$  (from now on, for any $\Z_2$-vector space $V$, we write indifferently $\dim_{\Z_2}(V)$ or $\dim (V)$ to denote the dimension over $\Z_2$ of $V$). When this inequality is an equality, $X$ is called \textit{Galois maximal} (see  \cite{RealEnrique} for additional details). The notion has been considered of interest in itself; see for example Krasnov \cite{Krasnov}, where the Galois maximality of various families of varieties is proved.

Galois maximality is equivalent to a condition of degeneration on the second page of a certain associated spectral sequence, called \textit {Kalinin spectral sequence}, which was introduced by Kalinin in \cite{Kalinin2} - further explanations can also be found in \cite{StiefelOrientations}. Smith-Thom maximality is equivalent to degeneration on the first page of this spectral sequence,  which plays an important role in the arguments used in \cite{RS}.

Therefore, one possible way to relax the primitivity condition in order to generalize the results from \cite{RS} could be to consider Patchworks obtained from gluing together simplicial real algebraic hypersurfaces that are merely Galois maximal, rather than primitive (and thus Smith-Thom maximal). Note that we already know that inequalities (\ref{BorneKristinArthur1}) and (\ref{BorneKristinArthur2}) do not hold as they are under such hypotheses - additional work is needed.

 Hence our interest in the action of the conjugation on the homology of simplicial real algebraic hypersurfaces, and in particular in the rank of the map $1+c_*$, in order to determine the conditions under which such a hypersurface is Galois maximal.

In  \cite{KerrZharkov}, G. Kerr and I. Zharkov have in particular showed that the complex part $\C X$ of a simplicial real algebraic hypersurface is homeomorphic to the associated \textit{phase tropical variety} $\mathcal{T}X$.
Moreover, it is easy to see that the phase tropical variety $\mathcal{T}X$ retracts by deformation to the \textit{coamoeba} $\CC_X$ of $X $. Hence we get a homotopy equivalence $\C X \longrightarrow \CC _X$.
Private conversations with Zharkov have led us to believe that this homotopy equivalence satisfies the condition of the following conjecture (which we hope to prove soon):

\begin{conj}\label{ConjectureConjugationZharkov}
Let $X$ be a simplicial real algebraic hypersurface. There is a homotopy equivalence $\phi: \C X \longrightarrow \CC_X$ such that the following diagram commutes for all $i\geq 0$:
\begin{center}
\begin{tikzcd}
H_i(\C X) \arrow[r, "\phi_*"] \arrow[d, "c_*"]
& H_i(\CC_X) \arrow[d, "c_*"] \\
H_i(\C X) \arrow[r, "\phi_*"]
& H_i(\CC_X)
\end{tikzcd}
\end{center}
where $c_*$ is induced by the complex conjugation on either $\C X$ or $\CC_X$.
\end{conj}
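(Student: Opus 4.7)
The plan is to construct $\phi$ as the composition of the Kerr--Zharkov homeomorphism $\Psi : \C X \to \mathcal{T}X$ with the standard deformation retraction $r : \mathcal{T}X \to \CC_X$, and to verify that each factor intertwines the corresponding involutions at least up to homotopy. Complex conjugation on $(\C^*)^n$ restricts to $\C X$ and acts on the argument torus $(S^1)^n$ as the antipodal map $\theta \mapsto -\theta$, so it descends to $\CC_X$. The same antipodal action on the phase component of each stratum of $\mathcal{T}X$ gives a natural involution, and a first sanity check is that these three involutions correspond to one another through the above two maps.

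The retraction $r$ contracts the tropical/real direction of $\mathcal{T}X$ while preserving phases. Since conjugation affects only the phase coordinates, $r$ is strictly $\Z/2$-equivariant, and the problem reduces to verifying an equivariance property for $\Psi$.

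The substantive step is therefore the analysis of $\Psi$. Kerr and Zharkov build this homeomorphism stratum by stratum over the tropical hypersurface $\mathrm{Trop}(X)$, gluing phase tori indexed by the faces of the Newton simplex $\Delta_P$ from data extracted from the monomials of $P$. Since $X$ is real, $P$ is fixed by $z \mapsto \bar z$, so the tropical skeleton and the combinatorial gluing data are all symmetric under phase negation. I would trace through their construction carefully and check that each chart, gluing map, and identification of phase tori commutes with the antipodal involution. The main obstacle I expect is bookkeeping of signs: the phase identifications depend on the signs of the real coefficients of $P$ and on a choice of argument, and one must confirm that these conventions do not obstruct equivariance. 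Private conversations with Zharkov referenced in the introduction presumably address exactly this point.

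If strict equivariance of $\Psi$ turns out to be too rigid, there is a natural fallback. Each stratum of $\C X$ and of $\mathcal{T}X$ fibers, up to the gluings, over a contractible base with phase-torus fibers, and the involution acts only on the fibers. Two equivariant maps that agree on fibers are then equivariantly homotopic on each stratum, and an obstruction-theoretic argument assembling these local equivariant homotopies along the skeletal filtration of $\mathcal{T}X$ should produce a global homotopy between $\Psi \circ c$ and $c \circ \Psi$. Composing with $r_*$ and passing to homology with $\Z_2$-coefficients then yields the commutative square asserted in the conjecture, regardless of whether $\Psi$ itself is strictly equivariant.
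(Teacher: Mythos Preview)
The statement you are attempting to prove is explicitly labeled in the paper as a \emph{conjecture}, not a theorem, and the paper contains no proof of it. The author writes just before the statement that ``Private conversations with Zharkov have led us to believe that this homotopy equivalence satisfies the condition of the following conjecture (which we hope to prove soon).'' The entire downstream result (Theorem~\ref{TheoremConjugationMainResultConsequence}) is stated conditionally on it. So there is no proof in the paper against which to compare your proposal.

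Your outline is in the spirit of what the paper suggests: the homotopy equivalence $\C X \to \CC_X$ is indeed obtained by composing the Kerr--Zharkov homeomorphism $\C X \to \mathcal{T}X$ with the deformation retraction $\mathcal{T}X \to \CC_X$, and the paper already notes this. But your write-up is not a proof either; it is a strategy with explicitly acknowledged gaps. Phrases such as ``I would trace through their construction carefully and check\ldots'' and ``If strict equivariance of $\Psi$ turns out to be too rigid, there is a natural fallback'' describe work that still needs to be done, not work you have done. The substantive issue---whether the Kerr--Zharkov homeomorphism can be made equivariant, or at least equivariant up to homotopy, with respect to conjugation---is precisely the content of the conjecture, and your proposal does not resolve it; it only identifies where the difficulty lies. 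The obstruction-theoretic fallback you sketch would require, at minimum, a careful equivariant version of the stratum-by-stratum gluing in \cite{KerrZharkov}, and you have not carried this out.

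In short: there is nothing to compare against, and your proposal is a reasonable plan of attack rather than a proof.
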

This would immediately imply
\begin{align*}
\dim (Im(1+c_* :H_{i}(\C X)\rightarrow H_{i}(\C X) )) =\\ \dim (Im(1+c_* :H_{i}(\CC_ X)\rightarrow H_{i}(\CC_ X) )),
\end{align*}
where $1$ is the identity.

Our main focus in the rest of this text is to prove the following result, though the details of the proof might be of interest in themselves when considering other related questions.
Consider a simplicial real algebraic hypersurface $X$, the Newton polytope $\Delta$ of the simplicial real Laurent polynomial $P$ that defines $X$, and pick a vertex $O$ of $\Delta$. The edges of $\Delta$ containing $O$ define $n$ integer vectors (choosing $O$ as their initial point). Define $A\in M_{n\times n}(\Z_2)$ as the matrix whose rows are these $n$ vectors modulo $2$. Then:

\begin{thm}\label{MainResult}
If $\R X$ intersects non-trivially each quadrant of $(\R^*)^n$, then 
$$\dim_{\Z _2}(H_*(\R X))  =  \dim_{\Z _2}\left( \frac{Ker(1+c_*)}{Im(1+c_*)} \right),$$
 where $1+c_* : H_*(\CC_X) \longrightarrow H_*(\CC_ X)$.

Otherwise, we have
\begin{align*}
\dim_{\Z _2} \left( \frac{Ker(1+c_*)}{Im(1+c_*)}\right) - \dim_{\Z _2}( H_*(\R X))  = 2(2^{n-\rank _{\Z_2}(A)}-1- (n-\rank _{\Z_2}(A))).
\end{align*}
\end{thm}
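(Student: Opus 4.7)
The plan is to invoke Conjecture~\ref{ConjectureConjugationZharkov} in order to replace $\C X$ by the coamoeba $\CC_X$ throughout, reducing the computation of $\dim(Ker(1+c_*)/Im(1+c_*))$ to a question on $H_*(\CC_X;\Z_2)$. The proof then proceeds in three stages: (i) computation of $\dim H_*(\R X)$, (ii) construction of an equivariant cell model of $\CC_X$ and description of $c_*$ on $H_*(\CC_X)$, and (iii) comparison of the two quantities.

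For (i), I would decompose $\R X=\bigsqcup_{\epsilon\in\Z_2^n}\R X_\epsilon$ according to the $2^n$ quadrants of $(\R^*)^n$. In quadrant $\epsilon$, the substitution $z_j=\epsilon_j\,e^{t_j}$ turns $P$ into a real sum of exponentials whose coefficients have signs $\mathrm{sign}(c_i)(-1)^{\langle v_i,\epsilon\rangle}$; these signs depend on $\epsilon$ only through $A\epsilon\in\Z_2^n$. A convexity argument based on the fact that $v_1,\dots,v_n$ span $\R^n$ shows that, for simplicial $P$, the zero set of such a sum is empty when all the signs agree and otherwise a connected smooth hypersurface homeomorphic to $\R^{n-1}$, hence contractible. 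Consequently $\dim H_*(\R X)$ equals the number of non-empty quadrants, and the empty quadrants are exactly those $\epsilon$ satisfying $A\epsilon=s_0$ for one specific target vector $s_0\in\Z_2^n$ depending on $\mathrm{sign}(c_0)$ and the $\mathrm{sign}(c_i)$. There are therefore either $0$ such quadrants (when $s_0\notin Im(A)$, corresponding to the first case of the theorem) or exactly $2^{n-\rank_{\Z_2}(A)}$ of them (the second case).

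For (ii), I would build a $c_*$-equivariant cellular model of $\CC_X$, leaning on the cell structure on the phase tropical variety $\mathcal{T}X$ given in \cite{KerrZharkov} and the deformation retraction $\mathcal{T}X\to\CC_X$. The cells are naturally indexed by faces of $\Delta$ and by a sign vector $\epsilon\in\Z_2^n$, and the identifications among translates of a fundamental piece by the involutions $\theta\mapsto\theta+\pi v_i$ modulo $2\pi\Z^n$ are controlled by the matrix $A$ (so that the distinct cells in $T^n$ are parametrised by $\Z_2^n/Ker(A)$). The complex conjugation acts on $T^n=(S^1)^n$ by $\theta\mapsto-\theta$ and has $2^n$ fixed points, the torsion points $\pi\tilde\epsilon$; exactly those lying in $\CC_X$ correspond to non-empty $\R X_\epsilon$. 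Using this equivariant model I would compute $c_*$ on $H_*(\CC_X)$ and, in particular, the rank of $1+c_*$.

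For (iii), I would combine the previous stages. Cycles supported on $\R X$ lie automatically in $Ker(1+c_*)$ and remain non-trivial in the quotient, yielding the Borel--Swan inequality $\dim H_*(\R X)\leq\dim(Ker(1+c_*)/Im(1+c_*))$. When $\R X$ meets every quadrant I would argue that these classes span the quotient, giving equality. In the "otherwise" case, the $2^{n-\rank A}$ missed torsion points produce additional cycles in $\CC_X$ that are fixed or swapped by $c_*$ and contribute to $Ker(1+c_*)/Im(1+c_*)$ without coming from $\R X$. A careful count, in which each of the two "branches" of the involution structure contributes $\sum_{j=2}^{n-\rank A}\binom{n-\rank A}{j}=2^{n-\rank A}-1-(n-\rank A)$ classes indexed by the subsets of size at least two of a basis of $Ker(A)$, would recover the defect $2(2^{n-\rank A}-1-(n-\rank A))$.

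The main obstacle is stage (ii): producing a cellular model of $\CC_X$ concrete enough to carry out the rank computation for $1+c_*$, and then isolating exactly which combinations of cells survive in $Ker(1+c_*)/Im(1+c_*)$ in the "otherwise" case. Matching the combinatorial quantity $2^{n-\rank A}-1-(n-\rank A)$ to the actual excess of cycles, as opposed to something slightly off, will require precise bookkeeping of the boundary identifications induced by $A$.
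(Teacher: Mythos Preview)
Your stage~(i) is essentially the paper's Lemma~\ref{LemmaRealPartHomology} and is fine. There are, however, two genuine gaps elsewhere. First, the statement of Theorem~\ref{MainResult} already concerns $1+c_*$ on $H_*(\CC_X)$, not on $H_*(\C X)$; invoking Conjecture~\ref{ConjectureConjugationZharkov} is unnecessary here and would make the theorem conditional when it is not. Second, and more seriously, your proposed equivariant cell model in stage~(ii) is miscalibrated: you index cells by $\Z_2^n/\Ker(A)$, which has order $2^{\rank_{\Z_2}(A)}$, but the homology of $\CC_X$ genuinely depends on $|\det A|=d_1\cdots d_n$, not just on $A$ modulo~$2$. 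Indeed $\dim H_{n-1}(\CC_X)=n+d_1\cdots d_n-1$, so a model with only $2^{\rank_{\Z_2}(A)}$ top cells cannot detect the right homology, and your stage~(iii) bookkeeping---matching the defect to subsets of a basis of $\Ker(A)$---has no chance of going through as stated.

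The paper's route is quite different and more concrete. One puts $A$ in Smith normal form, $G\cdot A\cdot H=D=\mathrm{diag}(d_1,\dots,d_n)$, and describes $\CC_X$ as the complement in $T^n$ of $d_1\cdots d_n$ translated open zonotopes $Z_\alpha$ indexed by $\Omega=\prod_i\{0,\dots,d_i-1\}$. A Mayer--Vietoris argument then gives $H_{n-1}(\CC_X)$ as generated by lifts $B_i$ of the standard $(n-1)$-torus classes together with the boundary classes $[\partial Z_\alpha]$, subject to the single relation $\sum_\alpha[\partial Z_\alpha]=0$. Conjugation acts by an explicit permutation $\alpha\mapsto c(\alpha)$ on $\Omega$ whose fixed set has size $0$ or $2^{|I^{0,0}|}$ according to whether $|I^{1,0}|\neq 0$ or $=0$ (these index sets record the parities of the $d_i$ and of $(G\delta)_i$). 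One then computes $(1+c_*)B_i$ in terms of the $[\partial Z_\alpha]$ and reads off $\dim\mathrm{Im}(1+c_*)$ directly; the two cases of the theorem correspond exactly to $|I^{1,0}|\neq 0$ and $|I^{1,0}|=0$, and the final identity $|I^{0,0}|=n-\rank_{\Z_2}(A)$ yields the stated defect. If you want to salvage your outline, the missing idea is precisely this Smith normal form reduction and the resulting zonotope decomposition of $T^n\setminus\CC_X$.
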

The condition on the intersection of $\R X$ with the quadrants of  $(\R^*)^n$ is equivalent to a condition on the matrix $A$ and the signs of the monomials of $P$, as proved in Lemma \ref{LemmaRealPartHomology}.

If Conjecture \ref{ConjectureConjugationZharkov} holds, then Theorem \ref{MainResult} immediately implies 
\begin{thm}\label{TheoremConjugationMainResultConsequence}
If $\R X$ intersects non-trivially each quadrant of $(\R^*)^n$, then $X$ is always Galois maximal.

Otherwise, we have
\begin{align*}
\dim_{\Z _2} \left( \frac{Ker(1+c_*)}{Im(1+c_*)}\right) - \dim_{\Z _2}( H_*(\R X))  = 2(2^{n-\rank _{\Z_2}(A)}-1- (n-\rank _{\Z_2}(A))),
\end{align*}
where $1+c_* : H_*(\C X) \longrightarrow H_*(\C X)$,
and $X$ is Galois maximal if and only if $n-\rank _{\Z_2}(A) \in \{0,1\}$.
\end{thm}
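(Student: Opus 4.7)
The plan is to derive Theorem \ref{TheoremConjugationMainResultConsequence} as a formal consequence of Theorem \ref{MainResult} together with Conjecture \ref{ConjectureConjugationZharkov}, by transporting the computation on the coamoeba $\CC_X$ to the complex hypersurface $\C X$, and then doing an elementary arithmetic check.

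First I would observe that if $\phi : \C X \to \CC_X$ is a homotopy equivalence making the diagram of Conjecture \ref{ConjectureConjugationZharkov} commute, then each $\phi_* : H_i(\C X) \to H_i(\CC_X)$ is a $\Z_2$-linear isomorphism intertwining the two $c_*$-actions. Consequently $\phi_*$ restricts to an isomorphism between the kernels of $1+c_*$ and sends the image of $1+c_*$ on $H_*(\C X)$ isomorphically onto the image of $1+c_*$ on $H_*(\CC_X)$, inducing an isomorphism
\[
\frac{Ker(1+c_* : H_*(\C X) \to H_*(\C X))}{Im(1+c_* : H_*(\C X) \to H_*(\C X))} \;\cong\; \frac{Ker(1+c_* : H_*(\CC_X) \to H_*(\CC_X))}{Im(1+c_* : H_*(\CC_X) \to H_*(\CC_X))}.
\]
In particular, the $\Z_2$-dimensions of these two quotients coincide.

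Then I would apply Theorem \ref{MainResult}. In the case where $\R X$ meets every quadrant of $(\R^*)^n$, that theorem gives $\dim_{\Z_2}(H_*(\R X)) = \dim_{\Z_2}(Ker(1+c_*)/Im(1+c_*))$ computed on $H_*(\CC_X)$; the previous paragraph transports this to the same equality on $H_*(\C X)$, which is by definition Galois maximality of $X$. In the complementary case, Theorem \ref{MainResult} directly yields the claimed numerical formula, the left-hand side of which now refers to $H_*(\C X)$ after the same transport (the right-hand side depends only on $n$ and $\rank_{\Z_2}(A)$, and is unchanged).

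It remains to identify when the resulting defect vanishes, i.e. when $X$ is Galois maximal in the second case. Setting $k := n - \rank_{\Z_2}(A) \in \{0, 1, \ldots, n\}$, I want to solve $2(2^k - 1 - k) = 0$. The function $f(k) := 2^k - 1 - k$ satisfies $f(0) = f(1) = 0$ and $f(k+1) - f(k) = 2^k - 1 \geq 1$ for $k \geq 1$, so $f$ is strictly increasing on $k \geq 1$; hence $f(k) = 0$ if and only if $k \in \{0,1\}$, giving the stated criterion. There is no genuine obstacle internal to this argument: the two substantive ingredients (the conjectural Conjecture \ref{ConjectureConjugationZharkov} and the main Theorem \ref{MainResult}) are assumed in hand, and the deduction of Theorem \ref{TheoremConjugationMainResultConsequence} from them is bookkeeping plus the elementary monotonicity check above.
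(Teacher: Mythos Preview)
Your proposal is correct and matches the paper's own treatment: the paper simply states that, assuming Conjecture \ref{ConjectureConjugationZharkov}, Theorem \ref{MainResult} immediately implies Theorem \ref{TheoremConjugationMainResultConsequence}, and you have just spelled out that immediate implication (transport along $\phi_*$ plus the elementary check that $2^k-1-k=0$ iff $k\in\{0,1\}$).
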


The rest of this article is organized as follows:
in Section \ref{Definitions}, we go over some definitions and notations.
In Section \ref{Coamoebas}, the coamoeba $\CC_X$ associated to $X$ is introduced and described in a way suited to computations. As the coamoeba $\CC_X$ is homotopically equivalent to $\C X$, we can study the action of the conjugation on $\CC _ X$ and deduce from it its action on $\C X$ (assuming Conjecture \ref{ConjectureConjugationZharkov}).
In Section \ref{HomologicalComputations}, the action of the conjugation on the homology of $\CC_ X$ is described. In particular, the rank of $1+c_ *$ is computed.
Finally, Theorem \ref{MainResult} is proved in Section \ref{GaloisMaximality}.

\section{Definitions and notations}\label{Definitions}

We denote the $n$-dimensional torus $(\R  /2\pi \Z  )^n$ (seen as the product of $n$ unit circles) by $T^n$, and use either additive or multiplicative notations, based on context, to describe its natural group law. In particular, we frequently apply matrices with integer coefficients to elements of $T^n$.

Given a vector of signs $(\varepsilon_1, \ldots,\varepsilon_n) \in \{1,-1 \}^n $, we define $\delta(\varepsilon)=(\delta_1, \ldots,\delta_n)\in \Z_2^n$ by the relation $\varepsilon_i = (-1) ^{\delta_i}$.\\
Given a finite set $S$, let $|S|$ denote the cardinality of $S$. \\
We define
$Arg: (\C^*)^n \longrightarrow T^n,\quad (z_1, \ldots, z_n) \longmapsto (arg(z_1),\ldots,arg(z_n)).$

Throughout this text, we use the following conventions: for any $z=(z_1, \ldots, z_n) \in \C^n$, any matrix $G\in M_{n\times n}(\Z) $ and any vector $v=(v_1,\ldots, v_n) \in \Z^n$, we define $z^v:= z_1^{v_1} z_2^{v_2} \ldots z_n^{v_n} \in \C$ and $z^G := (z^{G^1}, \ldots, z^{G^n}) \in \C^n$, where $G^i$ is the $i$-th \textbf{row} of $G$. Choosing rows instead of columns has the advantage of allowing us to write $Arg(z^G) = G\cdot Arg(z)$, and the disadvantage that for another matrix $H\in M_{n\times n}(\Z)$, we have ${(z^G)}^H=z^{H\cdot G}$.

Consider as above a simplicial polynomial $P(z) =\sum _{\alpha\in \Delta_P} c_\alpha z^\alpha$, for some coefficients $c_\alpha \in \R^*$, and the associated simplicial real algebraic hypersurface $X_P$. Up to multiplication by a non-trivial Laurent monomial (which doesn't change $X_P$), the polynomial $P$ can be chosen so that $\Delta_P$ has $0\in \Z ^n$ as one of its vertices. We assume this to be the case from now on. Moreover, let us denote the non-null vertices of $\Delta_P$ by $\alpha^1_P,\ldots,\alpha^n_P \in \Z ^n$, and define $A_P \in M_{n\times n}(\Z)$ as the matrix whose $i$-th \textbf{row} is $\alpha^i_P$ for $i=1, \ldots ,n$. If we also define $\alpha^0_P:=0\in \Z ^n$, we can write indifferently $P(z) =\sum _{\alpha\in \Delta_P} c_\alpha z^\alpha = \sum _{i=0,\ldots,n} c_{\alpha_P^i} z^{\alpha_P^i}  = \sum _{i=1,\ldots,n} c_{\alpha_P^i} z^{A_P^i}+c_0.$

For any $G\in M_{n\times n}(\Z)$, we define the algebraic morphism 
\begin{align*}
   \Phi _G : \quad  (\C^*)^{n}  & \to  (\C^*)^{n} \\
    z & \mapsto  z^G.
\end{align*}

If $G$ is invertible, $\Phi_G$ is an algebraic isomorphism that sends $X_{\tilde{P}} = \{ \tilde{P}(z)=\sum _{i=1,\ldots,n} c_{\alpha_P^i} z^{(A_P \cdot G)^i}+c_0= 0  \}$ to $X_P = \{ P(z)=\sum _{i=1,\ldots,n} c_{\alpha_P^i} z^{A_P^i}+c_0= 0  \}$ and respects the real structure. This means that up to such isomorphisms, we can consider the matrix $A_P$ defining the simplex $\Delta_P$ up to right-multiplication by invertible matrices with integer coefficients.  

\begin{remark}\label{Smoothness}
The complex part $\C X_P \subset (\C^*)^n$ of a simplicial real algebraic hypersurface $X_P$ is smooth.
\end{remark}
Let $X_P$ be given as above by $P(z) = \sum _{i=1,\ldots,n}  c_{i} z^{\alpha_i} +c_0$, for some coefficients $c_i \in \R^*$ and a simplex $\Delta_P$. Let $A$ be the $n \times n$ matrix whose rows correspond to the vertices $\alpha_i$ of $\Delta _ P$ and the monomials of $P$ (for $i=1,\ldots,n$). We also introduce $B$, the cofactor matrix of $A$, and $P_L(z) = \sum _{i=1,\ldots,n}  c_{i} z_i + c_0$ (the $L$ stands for "linear").

Then $P(z)=P_L\circ \Phi_A(z)$, thus $\Phi_A:(\C^*)^{n}   \to  (\C^*)^{n}$ maps $\C X_P$ to the hyperplane $\C X_{P_L} :=\{P_L(z) = 0 \}$. But $\Phi_A \circ \Phi_B = \Phi_{det(A) \cdot Id}: (z_1,\ldots,z_n) \mapsto (z_1^{det(A)},\ldots,z_n ^{det(A)})$ is a local diffeomorphism, which implies that so is $\Phi_A$ by a dimensional argument. Hence the smoothness of $\C X_P$ can be deduced from the smoothness of $\C X_{P_L} $.

\section{Coamoebas}\label{Coamoebas}
As above and for the remainder of this article, let $P(z)=\sum _{i=1,\ldots,n} \varepsilon_i c_{i} z^{\alpha^i} + c_0$, where $\varepsilon=(\varepsilon_1, \ldots, \varepsilon_n) \in \{1,-1 \}^n$ and $c_i \in \R_{>0}$  for all $i$,  be a simplicial real polynomial (we can suppose without loss of generality that the constant term is positive). Let $X:=X_P \subset (\C^*)^n$ be the associated simplicial real algebraic hypersurface, and let $\Delta_P$ be the Newton polytope of $P$, and $A$ be the $n \times n$ matrix whose rows correspond to the vertices of $\Delta _ P$ and the monomials of $P$, \textit{i.e.} $A^i = \alpha^i$.

\subsection{Definition and description of $\CC_X$}

The \textit{coamoeba} $\CC_X \subset T^n$ of $X$ is the closure in $T^n$ of the image $Arg(\C X)$. The conjugation $c$ acts as $-\id $ on $T^n$; if we fix a representation of $T^n$ as $[0,2\pi]^n$ with its boundary quotiented as needed, which we do from now on, $c$ is the central symmetry.




We will use the convenient description of $\CC_X$ given by Kerr and Zharkov in the linear case, \textit{i.e.} when the coamoeba is given by an affine hyperplane, as an intermediate step to get to the general case.
Let us introduce the simplicial polynomials $P_L^+(z)= \sum _{i=1,\ldots,n} c_{i} z_i + c_0$ and $P_L(z)=\sum _{i=1,\ldots,n}  \varepsilon_i c_{i} z_i +  c_0$, where the coefficients $c_i$ and $\varepsilon_i$ are the same as in $P$. Name $\CC_{X_{P_L^+}}$ and $\CC_{X_{P_L}}$ the associated coamoebas.

Consider $T:=T^1=\R /2\pi \Z $ and identify it with the unit circle via the map $E:[\theta] \mapsto \exp(i\theta)$. 
We say that  $(\theta _1, \ldots, \theta _ n) \in T^n$ are in an \textit{allowed configuration} if there is no open half-circle in $T$ containing all the $\theta_i$ as well as the point $1=E[0]$ ( which corresponds to the constant term).

 A \textit{zonotope} is the Minkowski sum in $\R^n$ of a finite collection of segments. Now consider such a zonotope $\tilde{Z}$; if the quotient map $\R^n \longrightarrow T^n= (\R / 2\pi \Z )^n$ restricted to the interior of $\tilde{Z}$ is an embedding, we call in the rest of this article,  by extension, the image $Z$ of the interior of $\tilde{Z}$ an (open) zonotope of $T^n$.
 
 \begin{lemma}\label{CoamoebaLinearPlusCase}
 The points of $\CC_{X_{P_L^+}}$ are exactly the n-tuples $(\theta _1, \ldots, \theta _ n) \in T^n$ in allowed configurations. There is an $n$-dimensional zonotope $ \tilde{Z}$ in $\R^n$, generated by $n+1$ segments, such that the quotient map $\R^n \longrightarrow T^n= (\R / 2\pi \Z )^n$ restricted to the interior of $\tilde{Z}$ is an embedding, and such that the open zonotope $Z$ of $T^n$ which is the image of the interior of $\tilde{Z}$ is the complement of the points in allowed configurations (see Figure \ref{Zonotope}).
 \end{lemma}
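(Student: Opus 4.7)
My approach breaks into two steps: identify the coamoeba with the set of allowed configurations, and then realize the complement as the embedded image of an explicit zonotope.

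\textbf{Step 1: coamoeba equals allowed configurations.} A point $(\theta_1, \ldots, \theta_n)$ lies in $Arg(\C X_{P_L^+})$ exactly when there exist $r_1, \ldots, r_n > 0$ with $c_0 + \sum_{i=1}^n c_i r_i e^{i\theta_i} = 0$. Setting $t_0 = c_0$ and $t_i = c_i r_i > 0$, this amounts to finding a strictly positive relation $\sum_{i=0}^n t_i u_i = 0$ among the $n+1$ unit vectors $u_0 = 1$, $u_i = e^{i\theta_i}$ in $\R^2$. By Gordan's theorem (or an elementary planar argument), such a strictly positive solution exists iff $0$ lies in the relative interior of the convex hull of $\{u_0, \ldots, u_n\}$. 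Passing to closures, I would combine a compactness argument (normalize the $t_i^{(k)}$ by their maximum and extract a convergent subsequence) with a direct perturbation argument (rotate any $u_i$ lying on a supporting line of the convex hull so that $0$ is pushed strictly into the interior) to show that $\overline{Arg(\C X_{P_L^+})}$ is precisely the set of configurations for which $0 \in \text{conv}(u_0, \ldots, u_n)$. This latter condition is equivalent to the absence of an open half-plane of $\R^2$ through the origin containing every $u_i$, i.e., to the absence of an open half-circle of $T$ containing $0, \theta_1, \ldots, \theta_n$ --- the allowed configuration condition.

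\textbf{Step 2: complement as an embedded zonotope.} Every open half-circle of $T$ containing the point $0$ can be written uniquely as $(-\sigma, \pi - \sigma)$ for some $\sigma \in (0, \pi)$, so $(\theta_1, \ldots, \theta_n)$ lies in the complement of the coamoeba iff there exist $\sigma, \eta_1, \ldots, \eta_n \in (0, \pi)$ with $\theta_i \equiv \eta_i - \sigma \pmod{2\pi}$ for every $i$. This exhibits the complement as the image under the quotient $q: \R^n \to T^n$ of the set $\tilde{Z}^\circ = \{(\eta_1 - \sigma, \ldots, \eta_n - \sigma) : \sigma, \eta_i \in (0, \pi)\} \subset \R^n$. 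A short calculation identifies $\tilde{Z}^\circ$ with the topological interior of the closed zonotope $\tilde Z$ generated by the $n + 1$ segments $[0, \pi e_i]$ for $i = 1, \ldots, n$ and $[0, -\pi \mathbf{1}]$, with $\mathbf{1} = (1,\ldots,1)$; explicitly, $\tilde Z = \{a \in \R^n : \max(0, a_1, \ldots, a_n) - \min(0, a_1, \ldots, a_n) \leq \pi\}$. Since $\tilde Z \subset [-\pi, \pi]^n$, its interior is contained in the open cube $(-\pi, \pi)^n$, on which $q$ is manifestly injective, giving the desired embedding.

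I expect the main obstacle to be the closure argument in Step 1 --- in particular, handling the boundary cases where $0$ lies on a proper face of $\text{conv}(u_0, \ldots, u_n)$, or where the $u_i$ are collinear (so one must directly verify that configurations like $\theta_i = \pi$ with other $\theta_j$ arbitrary but in the right half-plane are limits of strict ones). Once this is in place, Step 2 is essentially bookkeeping: the parameterization of open half-circles through $0$ is immediate, and injectivity on $(-\pi, \pi)^n$ is tautological.
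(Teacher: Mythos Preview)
Your argument is correct. The paper does not actually prove this lemma --- it declares the result ``almost trivial'' and defers to \cite{KerrZharkov} --- so there is no approach to compare yours against. Your two-step route is the natural one: identifying $Arg(\C X_{P_L^+})$ with the configurations admitting a strictly positive relation among $u_0=1,u_1=e^{i\theta_1},\dots,u_n=e^{i\theta_n}$, taking closures to reach the allowed configurations, and then parameterizing the forbidden ones via the unique half-circle $(-\sigma,\pi-\sigma)$ containing $0$. The closure step you flag is indeed the only delicate point; a configuration with $0$ on a proper face of $\mathrm{conv}(u_0,\dots,u_n)$ (say $u_0=1,\ u_1=i,\ u_2=-1$) is not itself in $Arg$ but is approximated by nudging one of the movable $u_j$ to push $0$ into the interior, exactly as you sketch. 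Your explicit description $\tilde Z=\{a\in\R^n:\max(0,a_1,\dots,a_n)-\min(0,a_1,\dots,a_n)\le\pi\}$ is correct and makes both the zonotope structure (sum of $[0,\pi e_i]$ and $[0,-\pi\mathbf 1]$) and the containment in $(-\pi,\pi)^n$ immediate.
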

 
 The proof is almost trivial - see \cite{KerrZharkov} for related details. Note that $Z$ is contractible in $T^n$, hence $(T^n,\CC_{X_{P_L^+}})$ is of the same homotopy type as $( T^n,T^n \backslash \{ \star \})$.

\begin{figure}
\includegraphics[scale=0.2]{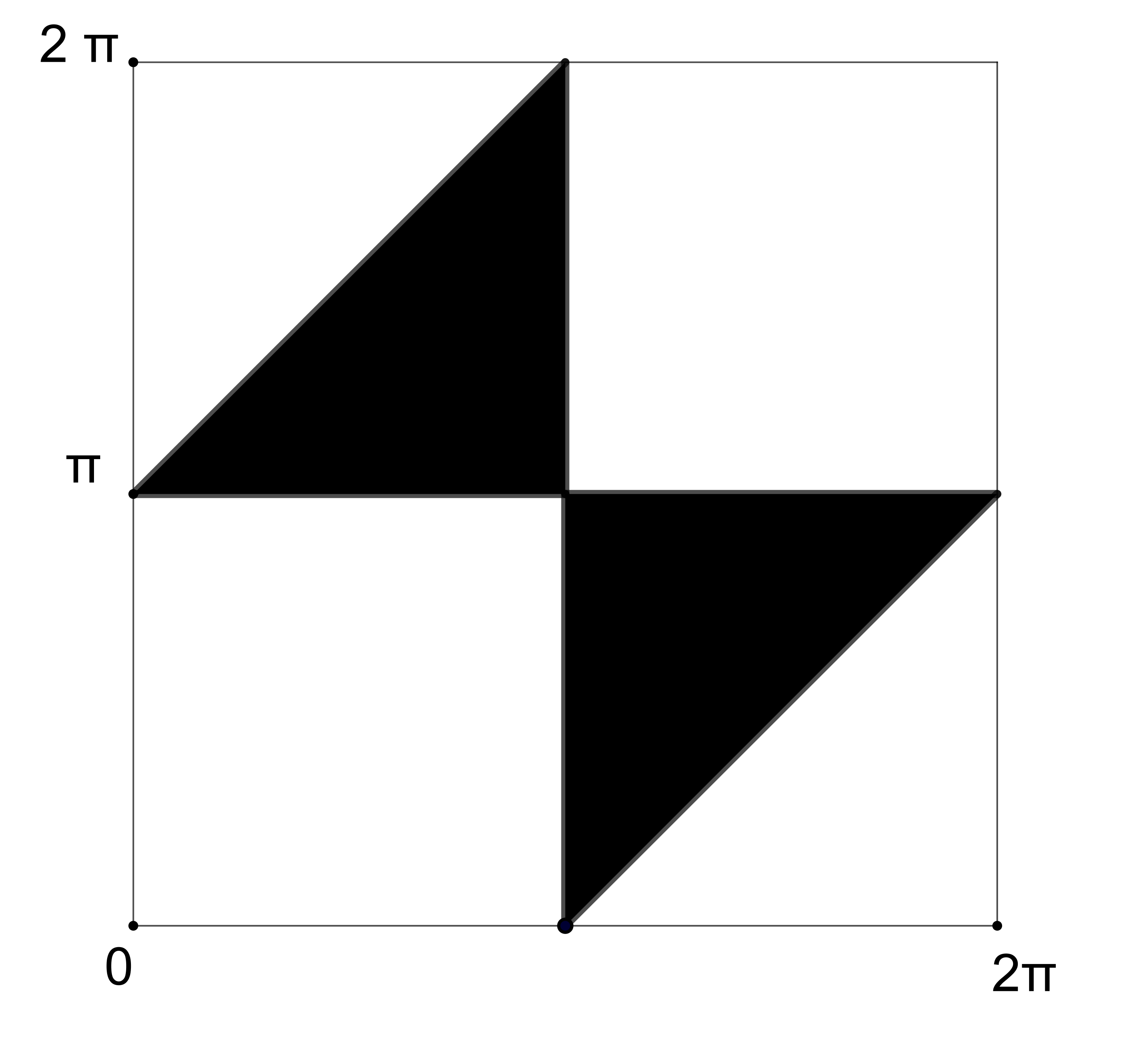}
\caption{
In black, the coamoeba $\CC _{X _{P_L^+}}$. In white, the open zonotope $Z$}
\label{Zonotope}
\end{figure}

We now consider the slightly more general case of $\CC_{X_{P_L}}$. Let $\delta=(\delta_1, \ldots,\delta_n):=\delta(\varepsilon_1, \ldots, \varepsilon_n)\in \Z_2^n$ be as in Section \ref{Definitions}.

\begin{lemma}\label{CoamoebaLinearCase}
The coamoeba $\CC_{X_{P_L}}$ is a translate of $\CC_{X_{P_L^+}}$ by $\pi \cdot \delta \in T^n$, \textit{i.e.} $\CC_{X_{P_L}}=\CC_{X_{P_L^+}}+\pi \cdot \delta \subset T^n$.
\end{lemma}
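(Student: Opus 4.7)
The plan is to exhibit an explicit isomorphism of the complex zero sets that translates by $\pi\cdot\delta$ on the argument torus, and then pass to closures.

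First I would define the algebraic automorphism $\Psi : (\C^*)^n \to (\C^*)^n$ by $\Psi(z_1,\ldots,z_n) = (\varepsilon_1 z_1,\ldots,\varepsilon_n z_n)$. A direct computation gives
\begin{equation*}
P_L^+\bigl(\Psi(z)\bigr) = \sum_{i=1}^n c_i(\varepsilon_i z_i) + c_0 = \sum_{i=1}^n \varepsilon_i c_i z_i + c_0 = P_L(z),
\end{equation*}
so $\Psi$ restricts to a bijection $\C X_{P_L} \longrightarrow \C X_{P_L^+}$.

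Next, since $\varepsilon_i = (-1)^{\delta_i}$ we have $\arg(\varepsilon_i z_i) = \arg(z_i) + \pi\delta_i \pmod{2\pi}$, so at the level of arguments
\begin{equation*}
\Arg\bigl(\Psi(z)\bigr) = \Arg(z) + \pi\cdot\delta \quad \text{in } T^n.
\end{equation*}
Applying this pointwise to $\C X_{P_L}$ gives $\Arg(\C X_{P_L^+}) = \Arg(\C X_{P_L}) + \pi\cdot\delta$. Translation by a fixed element of $T^n$ is a homeomorphism of $T^n$ and therefore commutes with the closure operation, so taking closures yields
\begin{equation*}
\CC_{X_{P_L^+}} = \CC_{X_{P_L}} + \pi\cdot\delta.
\end{equation*}
Since $2\pi\cdot\delta = 0$ in $T^n$, this rearranges to $\CC_{X_{P_L}} = \CC_{X_{P_L^+}} + \pi\cdot\delta$, which is the desired statement.

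There is no serious obstacle here: the lemma is entirely a matter of exploiting that changes of sign on the coordinates become translations by $\pi$ on the argument torus, and the only care needed is to note that $\Psi$ is a real algebraic automorphism so it genuinely sends one hypersurface onto the other (not just a subset) and commutes with taking closures in $T^n$.
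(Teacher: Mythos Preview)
Your proof is correct and is precisely the natural argument the paper has in mind; the paper itself merely states that ``the proof is once again almost trivial'' without writing out any details, and your map $\Psi(z)=(\varepsilon_1 z_1,\ldots,\varepsilon_n z_n)$ is the obvious way to make that triviality explicit.
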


The zonotope of forbidden configurations from Lemma \ref{CoamoebaLinearPlusCase} has simply been translated; the proof is once again almost trivial.

Let us now consider the general case of $\CC_X$.

\begin{lemma}\label{CoamoebaGeneralCase}
Using the same notations as above, the coamoeba $\CC _X $ is the preimage of the coamoeba $\CC_{X_{P_L}}$ by the map $A \cdot : T^n \longrightarrow T^n$, \textit{i.e.} $ \CC_X = A^{-1}(\CC_{X_{P_L}})$. 
\end{lemma}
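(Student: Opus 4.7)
The plan is to exploit the factorization $P = P_L \circ \Phi_A$, already observed in the discussion preceding Remark \ref{Smoothness}, together with the fundamental commutation $\Arg \circ \Phi_A = (A\,\cdot\,) \circ \Arg$ on $(\C^*)^n$. This identity is the whole reason we organized rows (rather than columns) of $A$ the way we did, and it turns the set-theoretic equality $\C X = \Phi_A^{-1}(\C X_{P_L})$ into something directly useful at the level of arguments.

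First I would establish the inclusion $\Arg(\C X) \subset A^{-1}(\Arg(\C X_{P_L}))$. This is immediate: for $z \in \C X$ one has $\Phi_A(z) \in \C X_{P_L}$ and $A \cdot \Arg(z) = \Arg(\Phi_A(z)) \in \Arg(\C X_{P_L})$. For the reverse inclusion $A^{-1}(\Arg(\C X_{P_L})) \subset \Arg(\C X)$, pick $\theta \in T^n$ with $A \cdot \theta = \Arg(w)$ for some $w \in \C X_{P_L}$. I want to produce $z \in \C X$ with $\Arg(z) = \theta$ and $\Phi_A(z) = w$, which forces $|z|^A = |w|$, i.e.\ $A \cdot \log|z| = \log|w|$. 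Since $\Delta_P$ is a non-degenerate simplex, $A$ has non-zero determinant, is therefore invertible over $\R$, and this linear system admits a unique real solution $\log|z|$. Taking $z := |z| \cdot \exp(i\theta)$ yields $z \in (\C^*)^n$ with $\Phi_A(z) = w$ and $\Arg(z) = \theta$, so that $P(z) = P_L(\Phi_A(z)) = P_L(w) = 0$ and $z \in \C X$. This gives $\Arg(\C X) = A^{-1}(\Arg(\C X_{P_L}))$.

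To conclude, I need to pass to closures and show
\begin{equation*}
\CC_X = \overline{\Arg(\C X)} = \overline{A^{-1}(\Arg(\C X_{P_L}))} = A^{-1}\bigl(\overline{\Arg(\C X_{P_L})}\bigr) = A^{-1}(\CC_{X_{P_L}}).
\end{equation*}
The only non-trivial equality is the middle one. Here the key observation is that $A \cdot : T^n \to T^n$ is a group homomorphism of the compact torus whose kernel is finite (because $\det A \neq 0$), hence a finite covering map; in particular it is continuous, open and proper. Continuity gives $\overline{A^{-1}(S)} \subset A^{-1}(\overline S)$ for any $S \subset T^n$. For the opposite inclusion, any $\theta \in A^{-1}(\overline S)$ can be approached: take $s_k \to A\theta$ in $S$, then by the covering property there is a neighbourhood $U$ of $A\theta$ over which $A\,\cdot$ is a disjoint union of homeomorphisms; for large $k$ the unique lift of $s_k$ lying in the sheet through $\theta$ belongs to $A^{-1}(S)$ and converges to $\theta$.

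The step I expect to require the most care is the lifting argument in the second paragraph, specifically verifying that the lift $z$ we produced actually lies in $\C X$ (rather than only in $\Phi_A^{-1}(\C X_{P_L})$ as a set, which is the same thing but deserves to be spelled out) and that $|z|$ built from the real linear system $A \log|z| = \log|w|$ is indeed a well-defined element of $(\R_{>0})^n$; the closure step is standard covering-space folklore once $A\,\cdot$ is identified as a covering map.
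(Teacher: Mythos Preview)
Your proof is correct and follows essentially the same approach as the paper's own argument: first establish $\Arg(\C X) = A^{-1}(\Arg(\C X_{P_L}))$ via the factorization $P = P_L \circ \Phi_A$ and surjectivity of $\Phi_A$ on moduli, then pass to closures using that $A\cdot$ is a local diffeomorphism/covering of $T^n$. The only cosmetic differences are that you solve $|z|^A = |w|$ by inverting $A$ over $\R$ in logarithmic coordinates while the paper uses the cofactor matrix $B$ to exhibit surjectivity of $\Phi_A$ on $(\R_{>0})^n$, and you phrase the closure step in covering-space language where the paper uses an explicit local-embedding neighbourhood argument.
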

\begin{proof}
First, let us show that $Arg(\C X) = A^{-1}(Arg(\C X_{P_L}))$.

Indeed, if $\theta =(\theta_1, \ldots, \theta_n) \in Arg(\C X)$, by definition there exists $r=(r_1, \ldots, r_n) \in (\R _{>0})^n$ such that  $z:= (r_1\theta_1,\ldots, r_n \theta _n)$ belongs to $\C X$, \textit{i.e.} $P(r_1\theta_1,\ldots, r_n \theta _n)=\sum _{i=1,\ldots,n} \varepsilon_i c_{i} r^{A^i}\theta^{A^i}+ c_0= P_L(r^{A^1}\theta^{A^1}, \ldots, r^{A^n}\theta^{A^n})=0$. Hence $(r^{A^1}\theta^{A^1}, \ldots, r^{A^n}\theta^{A^n})\in \C X_{P_L}$ and $\theta ^A = A \cdot \theta$ belongs to $Arg(\C X_{P_L})$.

Conversely, suppose that $A \cdot \theta \in Arg(\C X_{P_L}).$ Then by definition there exists $s=(s_1, \ldots, s_n) \in (\R _{>0}) ^n$ such that $P_L(s_1 \theta^{A_1}, \ldots, s_n \theta^{A_n})=\sum _{i=1,\ldots,n} \varepsilon_i c_{i} s_i \theta^{A^i}+ c_0= 0$. If there exists $r=(r_1, \ldots, r_n) \in (\R _{>0})^n$ such that $r^A=s$, then $z:=(r_1\theta_1,\ldots, r_n \theta _n)$ is such that $P(z) = 0$ and we can conclude that $\theta \in Arg(\C X)$.
Now consider the cofactor matrix $B$ of $A$, and 
\begin{align*}
     (\R_{>0})^{n}  & \xrightarrow{\Phi_B} (\R_{>0})^{n}   \xrightarrow{\Phi_A}  (\R_{>0})^{n} \\
     r =(r_1, \ldots, r_n) & \longmapsto r^B  \longmapsto  (r^B)^A=r^{det(A)} = (r_1^{det(A)}, \ldots, r_n^{det(A)}).
\end{align*}
This clearly shows that $\Phi_A$ is surjective from $ (\R_{>0})^{n} $ onto itself; thence we have shown that  $Arg(\C X) = A^{-1}(Arg(\C X_{P_L}))$.

As $A \cdot : T^n \longrightarrow T^n$ is continuous, we immediately have that $\CC _X = \overline{Arg(\C X)} \subset A^{-1}(\overline{Arg(\C X_{P_L})}) =A^{-1}(\CC_{X_{P_L}})$.
Following the same reasoning as in Remark \ref{Smoothness}, the map  $A \cdot : T^n \longrightarrow T^n$ is a local diffeomorphism, because $(AB) \cdot = (det(A) \cdot Id)\cdot  : T^n \longrightarrow T^n$ is a local diffeomorphism, where $B$ is the comatrix of $A$.
Consider $\theta \in A^{-1}(\overline{Arg(\C X_{P_L})})$, an open neighborhood $U \subset T^n$ of $\theta$ such that $(A \cdot)|_U$ is an embedding, and some open neighborhood $V \subset T^n$ of $\theta$. Then $A\cdot(U \cap V) $ is an open neighborhood of $A\cdot \theta \in \overline{Arg(\C X_{P_L})}$, hence there exists $\rho \in A\cdot(U \cap V)\cap  Arg(\C X_{P_L})$. Then $((A \cdot)|_U)^{-1}(\rho) \in V \cap  A^{-1}(Arg(\C X_{P_L})) = V \cap Arg(\C X) $, which shows that $\theta \in \overline{Arg(\C X)} $.
\end{proof}

\subsection{A more explicit description of $\CC_X$}\label{explicitDescription}

It is well-known (for example, using the theorem of structure of finitely generated abelian groups) that there exists two (non-uniquely defined) invertible matrices $G, H \in M_{n\times n}(\Z)$ such that
$$G\cdot A\cdot H=D=\begin{bmatrix}
    d_{1} & & 0 \\
    & \ddots & \\
    0 & & d_{n}
  \end{bmatrix}, $$
where $d_1|d_2|\ldots|d_n\in \Z$ and $d_1 d_2\ldots d_k$ is the greatest common divisor of the non-trivial $k$-minors of $A$ (for $k=1, \ldots,n$).

Consider $\delta^G = G\cdot \delta \in \Z_2^n$, where $\delta= \delta(\varepsilon_1, \ldots, \varepsilon_n) \in \Z_2^n$ is as above.

We partition $\{1,\ldots,n\}$ in the following way:
$$I^{0,0}:=\{i \in \{1,\ldots,n\} \quad | \quad \delta^G_i \equiv [0]_2, d_i \equiv [0]_2 \},$$
$$I^{1,0}:=\{i \in \{1,\ldots,n\} \quad | \quad \delta^G_i \equiv [1]_2, d_i \equiv [0]_2\},$$
$$I^{0,1}:=\{i \in \{1,\ldots,n\} \quad | \quad \delta^G_i \equiv [0]_2, d_i \equiv [1]_2 \},$$
$$I^{1,1}:=\{i \in \{1,\ldots,n\} \quad | \quad \delta^G_i \equiv [1]_2, d_i \equiv [1]_2\}.$$


For the remainder of this text, we fix two such matrices $G$ and $H$. Moreover, as observed in section \ref{Definitions}, we can consider $A$ up to right-multiplication by invertible matrices with integer coefficients: hence we can, and do, assume that $H$ is the identity matrix.
Hence from now on we have

$$
A = G^{-1}\cdot D.
$$

Then $\CC_X$ lends itself to the following description: define the set of indices $\Omega := \{0,\ldots,d_1 -1\} \times \ldots \times \{0,\ldots, d_n -1 \}$ and let $\widetilde{\delta^G} \in \{0,1\}^n$ be the unique lifting of $\delta^G$. Let also $\widetilde{G\cdot \CC_{X_{P_L}}}$ be the preimage of $G\cdot \CC_{X_{P_L}}$ by the quotient map $[0,2\pi]^n \longrightarrow T^n$ (see the left part of Figure \ref{FigureGeneralCoamoeba}).

Cover  $[0,2\pi]^n$ with $d_1 \ldots d_n$ hyperrectangular cells $C_\alpha :=[\alpha_1\frac{2\pi}{d_1}, (\alpha_1 +1)\frac{2\pi}{d_1} ]\times \ldots \times [\alpha_n\frac{2\pi}{d_n},(\alpha_n+1)\frac{2\pi}{d_n} ]$ for $\alpha=(\alpha_1, \ldots, \alpha_n) \in \Omega$. Define $\widetilde{\CC_X} \subset [0,2\pi]^n$ as the set such that the pair $(C_\alpha, C_\alpha \cap \widetilde{\CC_X})$ is mapped to the pair $([0,2\pi]^n ,\widetilde{G\cdot \CC_{X_{P_L}}})$ by  $(z_1, \ldots,z_n) \mapsto (d_1z_1 - \alpha_1 2\pi,\ldots, d_n z_n - \alpha_n 2\pi )$ (see the right part of Figure \ref{FigureGeneralCoamoeba}). Then

\begin{proposition}\label{DescriptionCoamoeba}
The pair $(T^n, \CC _  X)$ is the image of $([0,2\pi]^n ,\tilde{\CC_X})$ by the quotient map $[0,2\pi]^n \longrightarrow T^n$.

The coamoeba $\CC_X$ is the complement in $T^n$ of $d_1 \ldots d_n$ open zonotopes indexed by the set of indices $\Omega$, such that the center of zonotope $Z_\alpha$, for $\alpha= (\alpha_1,\ldots, \alpha_n)\in  \Omega)$,  is in $\pi \cdot (\widetilde{\delta^G}_1 / d_1,\ldots, \widetilde{\delta^G}_n /d_n)+2\pi \cdot (\alpha_1/d_1, \ldots, \alpha_n/d_n) \in T^n$. Those zonotopes are translate of each other.
\end{proposition}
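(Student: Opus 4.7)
The plan is to chain the three preceding lemmas with the Smith normal form decomposition $A = G^{-1} D$. By Lemma \ref{CoamoebaGeneralCase}, $\CC_X = A^{-1}(\CC_{X_{P_L}})$; since $A\cdot \theta \in \CC_{X_{P_L}}$ if and only if $D\cdot \theta \in G \cdot \CC_{X_{P_L}}$ (the matrix $G \in \GL_n(\Z)$ acts as an automorphism of $T^n$), combining this with Lemma \ref{CoamoebaLinearCase} yields
$$
\CC_X = D^{-1}\bigl( G \cdot \CC_{X_{P_L^+}} + \pi \cdot \delta^G \bigr) \subset T^n,
$$
using that $G \cdot (\pi \cdot \delta) = \pi \cdot \delta^G$ by definition. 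Lemma \ref{CoamoebaLinearPlusCase} identifies the complement of $\CC_{X_{P_L^+}}$ with a single open zonotope $Z \subset T^n$ whose lift $\widetilde{Z} \subset \R^n$ is convex; hence $\CC_X^c = D^{-1}(G \cdot Z + \pi \cdot \delta^G)$.

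Next, I would verify that $G \cdot Z$ is still an open zonotope of $T^n$ in the sense of Section \ref{Coamoebas}. Since $G \in \GL_n(\Z)$, $G$ acts as a linear automorphism of $\R^n$ sending $\widetilde{Z}$ to the convex zonotope $G \cdot \widetilde{Z}$; if two interior points $G\widetilde{x}, G\widetilde{y}$ of $G \cdot \widetilde{Z}$ had the same image in $T^n$, then $G(\widetilde{x} - \widetilde{y}) \in 2\pi \Z^n$, and since $G^{-1}$ is integral, $\widetilde{x} - \widetilde{y} \in 2\pi \Z^n$, which forces $\widetilde{x} = \widetilde{y}$ by the embedding property of $\widetilde{Z}$. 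Translating by $\pi \cdot \delta^G$ clearly preserves this structure.

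The main combinatorial step is a cell by cell analysis of the preimage under $D$. The matrix $D = \mathrm{diag}(d_1, \ldots, d_n)$ realises $T^n \to T^n$ as a covering of degree $d_1 \cdots d_n$, and the restriction of $D$ to each cell $C_\alpha$ (after quotienting) is a homeomorphism onto $T^n$, explicitly given by $(z_1, \ldots, z_n) \mapsto (d_1 z_1 - 2\pi\alpha_1, \ldots, d_n z_n - 2\pi\alpha_n)$ modulo $2\pi$. Pulling $\widetilde{G \cdot \CC_{X_{P_L}}}$ back through this affine map on each $C_\alpha$ produces exactly the set $\widetilde{\CC_X}$ described in the statement, which establishes the first assertion. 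Taking complements, each $\CC_X^c \cap C_\alpha$ is a rescaled translate of $G \cdot Z + \pi \cdot \delta^G$, which defines the open zonotope $Z_\alpha$; since these zonotopes all arise as affine images of a common template (only the translation varies with $\alpha$), they are translates of one another.

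Finally, the location of the centers follows by tracking $0 \in T^n$ through each step. The zonotope $Z$ is centered at $0$: indeed, $\CC_{X_{P_L^+}}$ is invariant under complex conjugation (the defining polynomial has positive real coefficients), conjugation acts on $T^n$ as $-\id$, and hence $Z = -Z$. Consequently $G \cdot Z + \pi \cdot \delta^G$ is centered at $\pi \cdot \delta^G$, and the unique point of $C_\alpha$ that $D$ maps to $\pi \cdot \delta^G$ is $\pi \cdot (\widetilde{\delta^G}_1/d_1, \ldots, \widetilde{\delta^G}_n/d_n) + 2\pi \cdot (\alpha_1/d_1, \ldots, \alpha_n/d_n)$, yielding the announced centers. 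The most delicate point of the argument is the clean justification that $Z$ is centered at $0$ and that the affine description of $D$ on each cell $C_\alpha$ genuinely pulls back $\widetilde{G \cdot \CC_{X_{P_L}}}$; everything else is essentially bookkeeping about the torus covering $D$.
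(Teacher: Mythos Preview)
Your proof is correct and follows essentially the same route as the paper: both derive $\CC_X = D^{-1}(G\cdot \CC_{X_{P_L}})$ from Lemma~\ref{CoamoebaGeneralCase} and the factorization $A=G^{-1}D$, identify the complement of $G\cdot \CC_{X_{P_L}}$ with a single open zonotope centered at $\pi\cdot\delta^G$, and then read off the structure of $\CC_X$ from the covering $D\cdot:T^n\to T^n$. You have simply supplied more of the details the paper leaves implicit---in particular the check that $G\cdot Z$ remains an embedded open zonotope, the explicit affine description of $D$ on each cell $C_\alpha$, and the justification via conjugation symmetry that $Z$ is centered at $0$.
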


\begin{proof}
From \ref{CoamoebaGeneralCase}, we know that $ \CC _  X= A^{-1}(\CC_{ X_{P_L}}) = D^{-1}(G\cdot \CC_{ X_{P_L}})$.

As explained in Lemmas \ref{CoamoebaLinearPlusCase} and \ref{CoamoebaLinearCase}, the coamoeba $\CC_{X_{P_L^+}}$ is the complement in $T^n$ of an open zonotope centered in $0$, and $\CC_{X_{P_L}}$ is the complement in $T^n$ of the same zonotope translated and now centered in $(\delta_1 \pi, \ldots, \delta _ n \pi)=\pi \cdot \delta \in T^n$.

The linear isomorphism $G$ is then applied to it, so that $ G\cdot \CC_{ X_{P_L}}$ is the complement in $T^n$ of an open zonotope (geometrically the starting zonotope deformed by $G$) centered in $\pi\cdot \delta^G \in T^n$. The last step is to take the preimage by the covering map $D\cdot : T^n \longrightarrow T^n$.

\end{proof}

\begin{figure}
\includegraphics[scale=0.5]{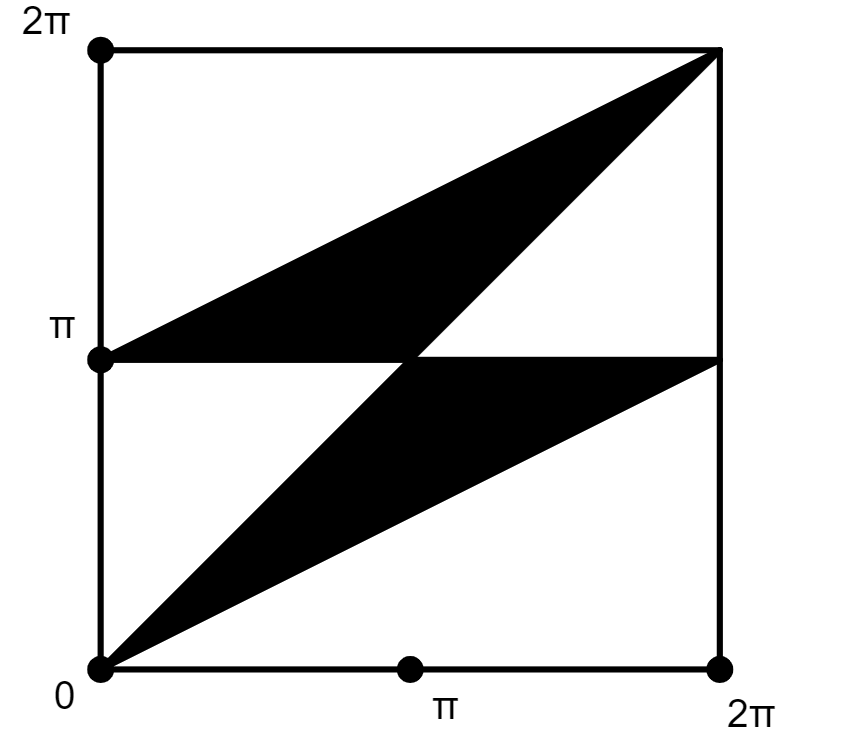}
\includegraphics[scale=0.5]{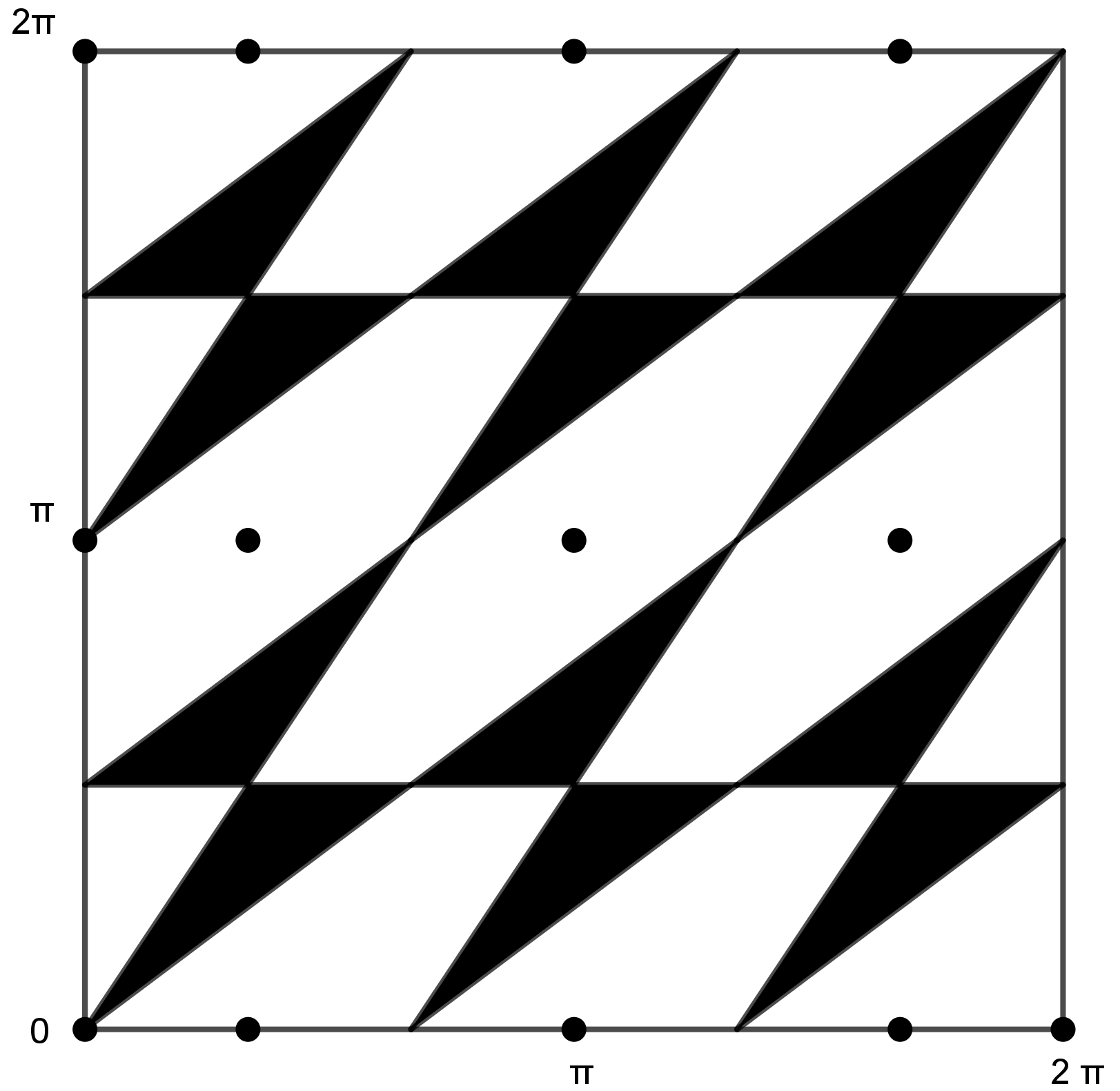}
\caption{
On the left, $\widetilde{G\cdot \CC_{X_{P_L}}}$. On the right:  the coamoeba $\CC _X$ in black and the zonotopes in white.}
\label{FigureGeneralCoamoeba}
\end{figure}

\section{Homological computations}\label{HomologicalComputations}
Let us now study the action of the conjugation $c$ on $\CC_ X$, using the description from Proposition \ref{DescriptionCoamoeba}. As pointed out earlier, $c$ simply acts as the central symmetry on $T^n$ seen as an appropriate quotient of $[0,2\pi]^n$. It maps $\CC_ X$ to itself, and zonotopes to one another. Let us describe that action more precisely.

\subsection{Action of the conjugation on the zonotopes}\label{SubsectionActionIndices}

Let $\alpha =(\alpha_1, \ldots, \alpha_n) \in \Omega =  \{0,\ldots,d_1 -1\} \times \ldots \times \{0,\ldots, d_n -1\}$. We define $c(\alpha) \in \Omega$ as the index such that $Z_{c(\alpha)}=c(Z_\alpha)$. Then:

If $i\in I^{0,0}$, then $c(\alpha)_i$ is the unique lifting to $\{0,\ldots,d_i -1 \}$ of $[d_i - \alpha_i]_{d_i}$. In particular, notice that $c(\alpha)_i = \alpha_i$ if and only if $\alpha_i \in \{ 0, d_i / 2\}$.

If $i\in I^{0,1}$ , then $c(\alpha)_i$ is the unique lifting to $\{0,\ldots,d_i -1 \}$ of $[d_i - \alpha_i]_{d_i}$. In particular, notice that $c(\alpha)_i = \alpha_i$ if and only if $\alpha_i=0$.

If $i\in I^{1,0}$, then $c(\alpha)_i = (d_i -1)- \alpha_i$. In particular, notice that $c(\alpha)_i$ is never equal to $\alpha_i$.

If $i\in I^{1,1}$, then $c(\alpha)_i = (d_i -1)- \alpha_i$. In particular, notice that $c(\alpha)_i = \alpha_i$ if and only if $\alpha_i=(d_i-1)/2$.

Denote by $F:=\{\alpha \in \Omega | c(\alpha) = \alpha \}$ the set of fixed points of $c$. If $|I^{1,0}| \neq 0$, then $F=\emptyset$. If $|I^{1,0}| = 0$, there are $2^{|I^{0,0}|}$ fixed points: the indices $\beta \in \Omega$ such that $\beta_i \in \{ 0, d_i / 2\}$ if $i\in I^{0,0}$, $\beta_i=0$ if $i\in I^{0,1}$ and $\beta_i=(d_i-1)/2$ if $i\in I^{1,1}$.

\subsection{Homology of $\CC_X$}\label{SubsectionHomologyCoamoeba}

Denote by $\Gamma_1, \ldots, \Gamma_n \in H_{n-1}(T^n)$ the $n$ homology classes induced by the $(n-1)$-dimensional tori $\{[\theta_i] =0\} \subset T^n $. They form a basis of $H_{n-1}(T^n)$.

Using a Mayer-Vietoris long exact sequence, it is easy to see that 
$$ H_k(\CC_X) \xrightarrow{in_*} H_k(T^n),$$
where $in_ *$ is induced by the inclusion, is an isomorphism for $k=0,\ldots,n-2$, and that $ H_k(\CC_X)=0$ for $k\geq n$.
We also get a short exact sequence 
$$ 0 \longrightarrow H_n(T^n) \longrightarrow H_{n-1}(\bigsqcup_{\alpha\in \Omega} \partial Z_\alpha) \longrightarrow H_{n-1}(\CC _X) \longrightarrow H_{n-1}(T^n) \longrightarrow 0 ,$$
where  $\partial Z_ \alpha$ is the border of zonotope $Z_\alpha$, which is homeomorphic to a $(n-1)$-sphere. From it, we deduce the following Lemma:

\begin{lemma}\label{descriptionHomologie}
Given $B_1, \ldots B_n \in H_{n-1}(\CC _X)$ such that $in_*(B_i) =\Gamma_ i $ for all $i$, we have an isomorphism
\begin{equation*}
    H_{n-1}(\CC _X) \cong \left(\bigoplus_{i=1,\ldots,n} \Z_2 \cdot B_i \right) \oplus \left( \frac{\bigoplus_{\alpha\in \Omega} \Z_2 \cdot [\partial Z_\alpha]}{\Z_2 \cdot \sum_{\alpha\in \Omega} [\partial Z_\alpha]}\right).
\end{equation*} 
\end{lemma}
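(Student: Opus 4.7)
The plan is to extract the isomorphism directly from the four-term exact sequence
\[
0 \longrightarrow H_n(T^n) \xrightarrow{\partial} H_{n-1}\!\Bigl(\bigsqcup_{\alpha\in\Omega}\partial Z_\alpha\Bigr) \xrightarrow{j_*} H_{n-1}(\CC_X) \xrightarrow{in_*} H_{n-1}(T^n) \longrightarrow 0
\]
displayed just above the lemma. Since each $\partial Z_\alpha$ is homeomorphic to $S^{n-1}$, I identify the middle term canonically with $\bigoplus_{\alpha\in\Omega}\Z_2\cdot[\partial Z_\alpha]$. The strategy is then first to pin down $\operatorname{Im}\partial$, and second to split the resulting short exact sequence using the classes $B_i$.

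For the first step, I would compute $\partial([T^n])$ by representing the mod-$2$ fundamental class of $T^n$ as $\sum_{\alpha\in\Omega}[\overline{Z_\alpha}]+[\overline{\CC_X}]$ and applying the standard geometric description of the Mayer--Vietoris connecting homomorphism: split a cycle according to the cover and take the boundary of either piece. This yields $\partial([T^n])=\partial\bigl(\sum_\alpha[\overline{Z_\alpha}]\bigr)=\sum_{\alpha\in\Omega}[\partial Z_\alpha]$ (no orientation issues arise over $\Z_2$), and exactness immediately gives
\[
\operatorname{Im}(j_*)=\operatorname{Ker}(in_*)\;\cong\; \frac{\bigoplus_{\alpha\in\Omega}\Z_2\cdot[\partial Z_\alpha]}{\Z_2\cdot\sum_{\alpha\in\Omega}[\partial Z_\alpha]}.
\]

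For the second step, the sequence $0\to\operatorname{Ker}(in_*)\to H_{n-1}(\CC_X)\xrightarrow{in_*}H_{n-1}(T^n)\to 0$ is an extension of $\Z_2$-vector spaces, hence splits. Since $H_{n-1}(T^n)$ has $\Gamma_1,\ldots,\Gamma_n$ as a basis, the assumed lifts $B_i$ with $in_*(B_i)=\Gamma_i$ determine a $\Z_2$-linear section $\Gamma_i\mapsto B_i$, producing the required internal direct-sum decomposition.

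The single genuine obstacle is the identification $\partial([T^n])=\sum_\alpha[\partial Z_\alpha]$; everything else is formal linear algebra over $\Z_2$. If a purely geometric argument feels slippery, I would fall back on the explicit zigzag definition of $\partial$: write $[T^n]=c_1+c_2$ with $c_1$ supported in the zonotope neighborhoods and $c_2$ in the neighborhood of $\CC_X$; choosing $c_1=\sum_\alpha[\overline{Z_\alpha}]$ makes $\partial([T^n])=[\partial c_1]=\sum_\alpha[\partial Z_\alpha]$ tautological.
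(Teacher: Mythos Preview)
Your proposal is correct and follows exactly the approach the paper intends: the paper simply writes ``From it, we deduce the following Lemma'' after displaying the four-term exact sequence, and your argument supplies precisely the two missing details (the identification of $\partial([T^n])$ with $\sum_\alpha[\partial Z_\alpha]$ via the Mayer--Vietoris zigzag, and the splitting of the resulting short exact sequence of $\Z_2$-vector spaces using the lifts $B_i$).
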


We already know that $c_*([\partial Z_ \alpha]) = [\partial Z_ {c(\alpha)}]$ (as we consider homology with coefficients in $\Z_2$, we need not concern ourselves with orientation), where $c(\alpha)$ is as above - we only need to find suitable classes $B_i$ and describe how $c_*$ acts on them.

Let us define $B_i$, for $i\in \{ 1,\ldots,n\}$. Consider the $(n-1)$-dimensional torus $\{[\theta_i] =0\} \subset T^n $, and the set $I_i$ of intersections between $\{[\theta_i] =0\}$ and the open zonotopes of the complement of $\CC _X$ in $T^n$. Notice that a given zonotope $Z_\alpha$ can intersect several times $\{[\theta_i] =0\}$ - each of these intersections appears as a distinct element of $I_i$. Call $N_i(\alpha)$ the number of such intersections. We would like to define $B_i$ as the class of $\{[\theta_i] =0\}$, with a modification (since $\{[\theta_i] =0\}$ is in general not included in $\CC_X$).

For each intersection $\gamma \in I_i$, let $\alpha(\gamma)\subset \Omega$ be the index of the zonotope corresponding to $\gamma$. The intersection $\gamma$ of $\{[\theta_i] =0\}$ and $\overline{Z_{\alpha(\gamma)}}$ cuts $\partial Z_{\alpha(\gamma)}$ in two (topological) half-spheres of dimension $n-1$. Call $S_\gamma^+$ the half-sphere that lies in the direction $e_i$ of $\{[\theta_i] =0\}$, and $S_\gamma^-$ the one lying in direction $-e_i$ .

We start our construction with $\{[\theta_i] =0\}$. For each $\gamma \in I_i$, we remove the intersection $\{[\theta_i] =0\}\bigcap Z_{\alpha(\gamma)}$, and we add $S_\gamma^+$. Thus we obtain a $(n-1)$-cycle - name it $\widetilde{B_i}$ and name its class $B_i$.
Observe that parts of the border of a given zonotope $Z_\alpha$ can appear several times in $\widetilde{B_i}$: in other words, there can be several $S_\gamma^+$ that are subsets of the same $\overline{Z_\alpha}$ (with non-trivial intersections).
Notice also that $S_\gamma^+$ can be homotopically contracted in $T^n$ to the intersection $\{[\theta_i] =0\}\bigcap \overline{Z_{\alpha(\gamma)}}$. Thus $B_i$ is a lift of $\Gamma_i$, as required. This construction is illustrated in Figure \ref{CycleConstruction}.

\begin{figure}
\includegraphics[scale=0.6]{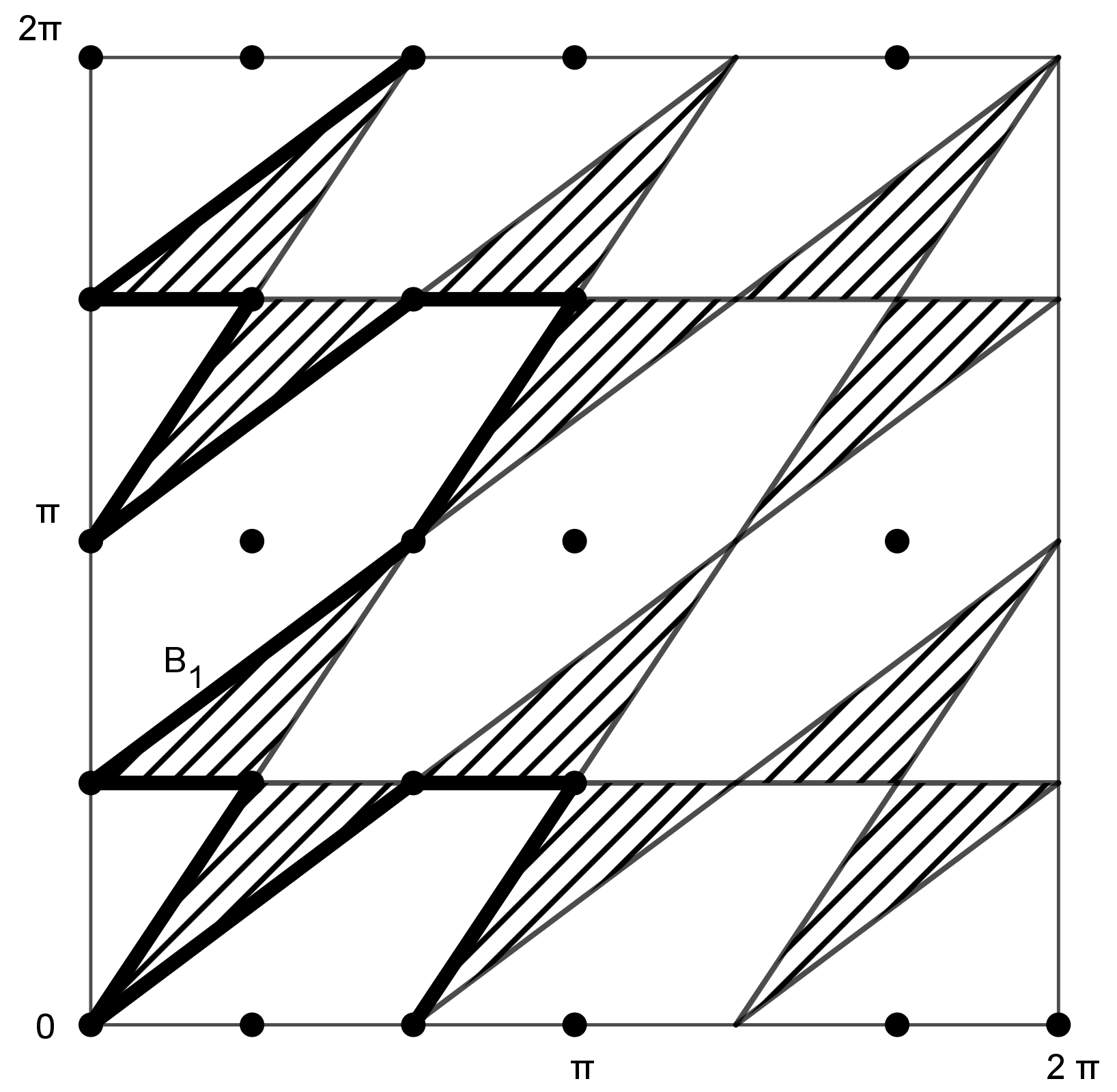}
\caption{
As a black broken line, the cycle $\widetilde{B_1}$. In that example, $I_1$ is of cardinal $4$.}
\label{CycleConstruction}
\end{figure}

\subsection{Image of $1+c_*$}

We want to compute the image of $1+c_*:H_{n-1}(\CC _ X) \longrightarrow H_{n-1}(\CC _ X)$, and in particular its dimension. Using the description in Lemma \ref{descriptionHomologie}, we see that  
\begin{equation}\label{descriptionImage}
   Im(1+c_*)= \sum_{i=1,\ldots,n}\Z_2 \cdot ((1+c_*)(B_i)) + \sum_{\alpha \in \Omega} \Z_2 \cdot [\partial Z_\alpha + \partial Z_{c(\alpha)}]
\end{equation}

Let us make some observations about the image of $B_i$ under $c_*$. It is the class of the mirror image by central symmetry of $\widetilde{B_i}$. This means that it consists of $\{[\theta_i] =0\}$, without the intersections $\{[\theta_i] =0\}\bigcap Z_{\alpha(\gamma)}$ (for $\gamma \in I_i$), and with all the $S_\gamma^-$, i.e. the half-spheres lying in the direction $-e_i$.
Thus $B_i+c_*(B_i) = \sum_{\gamma \in I_i}[\partial Z_{\alpha (\gamma)}]$ (since by definition $S_\gamma^+ +S_\gamma^- = \partial Z_\alpha$) - each $\partial Z_{\alpha}$ appears $N_i(\alpha)$ times.

It would be possible to compute $N_i(\alpha)$ exactly. However, we only want to compute the image of $1+c_*$ - the following observations suffice for our purpose.

Since we are considering coefficients in $\Z _2$, we only have to pay attention to the parity of $N_i(\alpha)$.
Let us define $J_i : =\{ \alpha \in \Omega \quad | \quad N_i(\alpha) \equiv [1]_2\}$. The number of intersections $N_i(\alpha)$ depends only on the length of the zonotope $Z_\alpha$ in the direction $e_i$ (which is the same for all $\alpha$) and on the $i$-th coordinate of the center of $Z_\alpha$, \textit{i.e.} on $\alpha_i$.


As we only have to consider the coordinate $\alpha_i$ and because of the symmetry of the situation, it is clear that if $\beta \in J_i$, then $c(\beta)\in J_i$. Consider the description (\ref{descriptionImage}) of $Im(1+c_*)$; if $\beta \neq c(\beta)$, we have
$$\partial Z_\beta + \partial Z_{c(\beta)} \in \sum_{\alpha\in\Omega} \Z_2 \cdot [\partial Z_\alpha + \partial Z_{c(\alpha)}]. $$
Thus for any $i\in \{1,\ldots,n \}$ we get

\begin{align*}
\Z_2 \cdot ((1+c_*)(B_i)) + \sum_{\alpha\in \Omega} \Z_2 \cdot [\partial Z_\alpha + \partial Z_{c(\alpha)}] &=\\
\Z_2 \cdot \sum_{\beta\in J_i}[\partial Z_\beta] + \sum_{\alpha\in \Omega} \Z_2 \cdot [\partial Z_\alpha + \partial Z_{c(\alpha)}]= \\
\Z_2 \cdot \sum_{\beta \in J_i \cap F}[\partial Z_{\beta}] + \sum_{\alpha\in \Omega} \Z_2 \cdot [\partial Z_\alpha + \partial Z_{c(\alpha)}],
\end{align*}
where as above $F$ is the set of fixed elements $\beta \in \Omega$.

We need to determine for all $i\in \{1,\ldots,n \}$ the set $J_i \cap F$. Doing so will allow us to prove the following Lemma.

\begin{lemma}\label{LemmaImageHomology}

If $|I^{1,0}| \neq 0$, the image of $1+c_*$ is $\sum_{\alpha\in \Omega} \Z_2 \cdot [\partial Z_\alpha + \partial Z_{c(\alpha)}]$.

If $|I^{1,0}| = 0$,
$$    Im(1+c_*) = \sum_{i\in I^{0,0}}\Z_2 \cdot ((1+c_*)(B_i)) + \sum_{\alpha\in \Omega} \Z_2 \cdot [\partial Z_\alpha + \partial Z_{c(\alpha)}].$$
\end{lemma}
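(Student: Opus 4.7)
The plan is to combine the congruence
$$(1+c_*)(B_i) \equiv \sum_{\beta \in J_i \cap F} [\partial Z_\beta] \quad \text{modulo} \quad \sum_{\alpha \in \Omega} \Z_2 \cdot [\partial Z_\alpha + \partial Z_{c(\alpha)}]$$
derived just above the statement with a case analysis of $J_i \cap F$ depending on which of the four sets $I^{0,0}, I^{0,1}, I^{1,0}, I^{1,1}$ contains $i$. The input to that congruence is the $c$-invariance of $J_i$, which holds because $N_i(\alpha)$ depends only on $\alpha_i$, and a direct computation shows the $c$-action on $\mu_i(\alpha) := \pi\widetilde{\delta^G}_i/d_i + 2\pi\alpha_i/d_i$ (the $i$-th coordinate of the center of $Z_\alpha$) is reflection across $0 \in T$ in all four cases, which preserves the intersection count with $\{[\theta_i]=0\}$ thanks to the central symmetry of each zonotope.

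If $|I^{1,0}| \neq 0$, Subsection \ref{SubsectionActionIndices} gives $F = \emptyset$, so every $(1+c_*)(B_i)$ already lies in the zonotope-pair subspace, which combined with (\ref{descriptionImage}) yields the first claim.

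If $|I^{1,0}| = 0$, I would determine $J_i \cap F$ by plugging the description of $F$ from Subsection \ref{SubsectionActionIndices} into the formula for $\mu_i(\beta)$: for $\beta \in F$ one finds $\mu_i(\beta) \in \{0, \pi\}$ with $\mu_i(\beta) = 0$ iff $\beta_i = 0$ when $i \in I^{0,0}$, always $\mu_i(\beta) = 0$ when $i \in I^{0,1}$, and always $\mu_i(\beta) = \pi$ when $i \in I^{1,1}$. Since each zonotope is centrally symmetric with $i$-extent strictly less than $2\pi$, a direct geometric count (the $i$-projection is an interval of length $<2\pi$, so it contains one multiple of $2\pi$ when centered at $0$ and none when centered at $\pi$) shows $N_i(\alpha) = 1$ when $\mu_i(\alpha) = 0$ in $T$ and $N_i(\alpha) = 0$ when $\mu_i(\alpha) = \pi$. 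Hence $J_i \cap F = \{\beta \in F : \beta_i = 0\}$ for $i \in I^{0,0}$, $J_i \cap F = F$ for $i \in I^{0,1}$, and $J_i \cap F = \emptyset$ for $i \in I^{1,1}$.

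To finish, I would invoke Lemma \ref{descriptionHomologie}, which provides the relation $\sum_{\alpha \in \Omega}[\partial Z_\alpha] = 0$ in $H_{n-1}(\CC_X)$; together with the pairing of non-fixed orbits this gives $\sum_{\beta \in F}[\partial Z_\beta] \in \sum_\alpha \Z_2 \cdot [\partial Z_\alpha + \partial Z_{c(\alpha)}]$. Therefore the terms $(1+c_*)(B_i)$ with $i \in I^{0,1}$ are absorbed into the zonotope-pair subspace, those with $i \in I^{1,1}$ are as well (their $J_i \cap F$ is empty), and only the terms with $i \in I^{0,0}$ survive in (\ref{descriptionImage}), which is the announced formula. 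The main obstacle is the geometric parity calculation of $N_i(\beta)$ for $\mu_i(\beta) \in \{0,\pi\}$, which requires carefully matching the four-case partition $I^{a,b}$ with the shifts $\widetilde{\delta^G}_i/d_i$ encoded in the zonotope centers.
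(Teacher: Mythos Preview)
Your argument follows the same route as the paper's: reduce $(1+c_*)(B_i)$ modulo the zonotope-pair subspace to $\sum_{\beta\in J_i\cap F}[\partial Z_\beta]$, then determine $J_i\cap F$ case by case and absorb the $I^{0,1}$ and $I^{1,1}$ contributions. Your explicit computation of $\mu_i(\beta)\in\{0,\pi\}$ for $\beta\in F$ is more detailed than what the paper writes out, and your conclusions about $J_i\cap F$ agree with the paper's.

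There is one genuine slip in the justification. The claim that the $i$-extent of the lifted zonotope $\tilde Z_\alpha\subset\R^n$ is strictly less than $2\pi$ is not warranted: once the linear zonotope is sheared by $G\in GL_n(\Z)$, its projection onto the $i$-th axis can have length well beyond $2\pi$ (only the \emph{injective embedding into $T^n$} is guaranteed, and a long thin diagonal strip embeds without difficulty). So ``one multiple of $2\pi$ when centered at $0$ and none when centered at $\pi$'' is not literally true in general. What \emph{is} true, and is all you need, is the parity: the $i$-projection of $\tilde Z_\alpha$ is an open interval symmetric about $\mu_i(\alpha)$, so the set $\{k\in\Z:2\pi k\in(\mu_i-\ell,\mu_i+\ell)\}$ is invariant under $k\mapsto -k$ when $\mu_i=0$ (hence has odd cardinality, since $k=0$ is its own partner) and invariant under $k\mapsto 1-k$ when $\mu_i=\pi$ (hence even, since no integer is fixed). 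Replace the extent assertion by this symmetry observation and the proof is complete; the paper itself simply asserts the parity without spelling out either argument.
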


\begin{proof}
Indeed, as noted in Subsection \ref{SubsectionActionIndices}, there are no fixed points if $|I^{1,0}| \neq 0$.

Let us now consider the case $|I^{1,0}| = 0$. 

If $i\in I^{0,0}$,  $\beta\in J_i$ for all $\beta$ such that $\beta_i = 0$, and $\beta\notin J_i$ for all $\beta$ such that $\beta_i =(d_i -1)/2$. In particular, $J_i \cap F $ is the set of cardinality $2^{|I^{0,0}|-1}$ of all $\beta \in \Omega$ such that $\beta_i=0$, $\beta_j \in \{ 0, d_j / 2\}$ if $j\in I^{0,0} - \{i\}$, $\beta_j=0$ if $j\in I^{0,1}$ and $\beta_j=(d_j-1)/2$ if $j\in I^{1,1}$.

If $i\in I^{0,1}$, $\beta\in J_i$ for all $\beta$ such that $\beta_i = 0$. In particular, $J_i \cap F = F$. Remember description (\ref{descriptionHomologie}); we have $\sum_{\alpha \in \Omega}[\partial Z_{\alpha}] =0$ in $H_{n-1}(\CC_X)$.
Then,
$$  \sum_{\alpha \in F}[\partial Z_{\alpha}] = \sum_{\alpha \in F}[\partial Z_{\alpha}] +  \sum_{\alpha \in \Omega}[\partial Z_{\alpha}] \subset \sum_{\alpha\in \Omega} [\partial Z_\alpha + \partial Z_{c(\alpha)}] .$$
This implies that $\Z_2 \cdot ((1+c_*)(B_i)) \subset \sum_{\alpha\in \Omega} \Z_2 \cdot [\partial Z_\alpha + \partial Z_{c(\alpha)}]$.

If $i\in I^{1,1}$, $\beta\notin J_i$ for all $\beta$ such that $\beta_i =(d_i -1)/2$. In particular, $J_i \cap F = \emptyset$ .

Using description (\ref{descriptionImage}) , we can conclude.
\end{proof}

\subsection{Rank of $1+c_*$}

Let us compute the rank of $1+c_*$. We get the following proposition.
\begin{proposition}\label{PropositionRank}
If $|I^{1,0}| \neq 0$,
$$\dim({Im(1+c_*)}) = \frac{d_1\ldots d_n} {2}-1. $$
If $|I^{1,0}| = 0$,
$$\dim({Im(1+c_*)})= |I^{0,0}|+ \frac{d_1\ldots d_n-2^{|I^{0,0}|}}{2}.$$

\end{proposition}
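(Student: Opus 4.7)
My plan is to combine Lemma \ref{LemmaImageHomology}, which already describes $Im(1+c_*)$ as an explicit sum of two subspaces, with the direct-sum decomposition of Lemma \ref{descriptionHomologie}, and then reduce the dimension count to a short linear-algebra computation in $\Z_2^\Omega$. The first observation is that $Im(1+c_*)$ sits entirely in the zonotope part of $H_{n-1}(\CC_X)$: the generators $[\partial Z_\alpha+\partial Z_{c(\alpha)}]$ are by definition there, and from the text just before Lemma \ref{LemmaImageHomology} we have $(1+c_*)(B_i)=\sum_{\gamma\in I_i}[\partial Z_{\alpha(\gamma)}]$, also a $\Z_2$-combination of the $[\partial Z_\alpha]$. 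Hence I would work in the quotient $Q:=\bigl(\bigoplus_{\alpha\in\Omega}\Z_2\cdot[\partial Z_\alpha]\bigr)\big/\Z_2\cdot\sigma$, with $\sigma:=\sum_{\alpha\in\Omega}[\partial Z_\alpha]$, and it turns out to be cleanest to first compute dimensions in the free pre-quotient $P:=\bigoplus_{\alpha\in\Omega}\Z_2\cdot[\partial Z_\alpha]$ and then decide, at the end, whether $\sigma$ lies in the subspace of interest (costing either $0$ or $1$ in dimension).

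Set $V_1:=\sum_{\alpha\in\Omega}\Z_2\cdot[\partial Z_\alpha+\partial Z_{c(\alpha)}]$. In $P$, the generator $\partial Z_\alpha+\partial Z_{c(\alpha)}$ is zero for $\alpha\in F$ (since coefficients are in $\Z_2$) and otherwise equals the characteristic vector of the $2$-orbit $\{\alpha,c(\alpha)\}$; distinct $2$-orbits have disjoint supports, so these characteristic vectors are linearly independent and $\dim_P V_1 = (|\Omega|-|F|)/2$. If $|I^{1,0}|\neq 0$, Subsection \ref{SubsectionActionIndices} gives $F=\emptyset$, so $\dim_P V_1 = d_1\ldots d_n/2$; in this case $\sigma$ is exactly the sum of all pair-generators, hence lies in $V_1$, and passing to $Q$ drops one dimension, yielding $d_1\ldots d_n/2 - 1$, as required.

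If $|I^{1,0}|=0$, then $|F|=2^{|I^{0,0}|}$ and the proof of Lemma \ref{LemmaImageHomology} shows that $(1+c_*)(B_i)\equiv v_i:=\sum_{\beta\in J_i\cap F}[\partial Z_\beta] \pmod{V_1}$, so it suffices to compute $\dim_Q(V_1+V_2)$ with $V_2:=\sum_{i\in I^{0,0}}\Z_2\cdot v_i$. Since $V_1$ is supported on $\Omega\setminus F$ while $V_2$ is supported on $F$, we have $V_1\cap V_2=0$ in $P$ and $\dim_P(V_1+V_2)=\dim_P V_1+\dim_P V_2$. To obtain $\dim_P V_2$, I would identify $F$ with $\{0,1\}^{I^{0,0}}$ via the explicit parametrization of fixed points from Subsection \ref{SubsectionActionIndices}; under this identification, $J_i\cap F$ becomes $\{f:f_i=0\}$, so $v_i$ becomes the $\Z_2$-valued function $1+x_i$ with $x_i$ the $i$-th coordinate. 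The functions $\{1\}\cup\{x_i\}_{i\in I^{0,0}}$ are linearly independent on the Boolean cube $\{0,1\}^{I^{0,0}}$ (different multidegrees), hence so are the $\{1+x_i\}$, giving $\dim_P V_2 = |I^{0,0}|$; the same independence shows that the constant function $1$, which is the contribution of $F$ to $\sigma$, does not lie in the span of $\{1+x_i\}$, so $\sigma\notin V_1+V_2$ and the pre-quotient dimension is preserved. This yields $\dim(Im(1+c_*)) = |I^{0,0}|+(d_1\ldots d_n - 2^{|I^{0,0}|})/2$, as announced.

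The main obstacle, and the only computation that is not mere bookkeeping, is the second case: correctly identifying $F$ with $\{0,1\}^{I^{0,0}}$, recognising $v_i$ as the function $1+x_i$, and then using linear independence of $\{1\}\cup\{x_i\}_{i\in I^{0,0}}$ twice — once to conclude $\dim_P V_2=|I^{0,0}|$, and once to verify $\sigma\notin V_1+V_2$. Everything else is orbit-counting for the action of $c$ on $\Omega$, already carried out in Subsection \ref{SubsectionActionIndices} and the proof of Lemma \ref{LemmaImageHomology}.
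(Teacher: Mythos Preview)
Your proof is correct and follows essentially the same route as the paper: compute in the free pre-quotient $\bigoplus_{\alpha}\Z_2\cdot[\partial Z_\alpha]$, count $2$-orbits for $V_1$, establish independence of the $(1+c_*)(B_i)$ modulo $V_1$, and finally check whether $\sigma$ lies in the resulting subspace. The only cosmetic difference is that where the paper exhibits, for each $i\in I^{0,0}$, an explicit fixed index $\beta$ (with $\beta_i=0$ and $\beta_j=d_j/2$ otherwise) as a witness to independence, you package the same witnesses abstractly by identifying $F\cong\{0,1\}^{I^{0,0}}$ and recognising $v_i$ as the Boolean function $1+x_i$; evaluating at the point $(1,\dots,1,0,1,\dots,1)$ recovers exactly the paper's witness.
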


\begin{proof}
We will first compute the dimension of $$\sum_{i=1,\ldots,n}\Z_2 \cdot ((1+c_*)(B_i)) + \sum_{\alpha\in \Omega} \Z_2 \cdot [\partial Z_\alpha + \partial Z_{c(\alpha)}]$$ in 
$$\left(\bigoplus_{i=1,\ldots,n} \Z_2 \cdot B_i\right) \oplus \left(\bigoplus_{\alpha\in \Omega} \Z_2 \cdot [\partial Z_\alpha]\right),$$
then quotient by $Z_2 \cdot \sum_{\alpha\in \Omega} [\partial Z_\alpha]$ to get the dimension in $H_{n-1}(\CC _ X)$ (see Lemma \ref{descriptionHomologie}).

Denote by $\tilde{\Omega}\subset \Omega$ a set defined as such: for each pair of elements $\{\alpha, c(\alpha)\}$, where $\alpha \in \Omega \backslash F$, choose exactly one element to be included in $\tilde{\Omega}$.
Thus $\sum_{\alpha\in\Omega} \Z_2 \cdot [\partial Z_\alpha + \partial Z_{c(\alpha)}] = \sum_{\alpha\in \tilde{\Omega}} \Z_2 \cdot [\partial Z_\alpha + \partial Z_{c(\alpha)}]$.
This sum is clearly a direct sum before quotienting.

If $|I^{1,0}| \neq 0$, then $Im(1+c_*) =\sum_{\alpha\in \Omega} \Z_2 \cdot [\partial Z_\alpha + \partial Z_{c(\alpha)}]= \sum_{\alpha \in \tilde{\Omega}} \Z_2 \cdot [\partial Z_\alpha + \partial Z_{c(\alpha)}]$. When we quotient by $Z_2 \cdot \sum_{\alpha\in \Omega} [\partial Z_\alpha]$, the dimension decreases by exactly one, as $\sum_{\alpha\in \Omega} [\partial Z_\alpha] \in \sum_{\alpha \in \tilde{\Omega}} \Z_2 \cdot [\partial Z_\alpha + \partial Z_{c(\alpha)}]$ (since there is no fixed point). Hence 
$$\dim({Im(1+c_*)})= |\tilde{\Omega}|-1 = \frac{d_1\ldots d_n} {2}-1. $$

If $|I^{1,0}| = 0$, we can see that
$$\sum_{i\in I^{0,0}}\Z_2 \cdot ((1+c_*)(B_i)) + \sum_{\alpha\in \tilde{\Omega}} \Z_2 \cdot [\partial Z_\alpha + \partial Z_{c(\alpha)}]$$
is actually a free sum before quotienting. Indeed, suppose that for each $ \alpha \in \tilde{\Omega} $ (respectively, $i\in I^{0,0}$) there is $\lambda_\alpha \in \Z_2$ (respectively, $\lambda_i \in \Z_2$), not all $0$, such that 
\begin{equation}\label{PreuveIndep}
    \sum_{i\in I^{0,0}}\lambda_i  (1+c_*(B_i)) + \sum_{\alpha\in \tilde{\Omega}} \lambda_\alpha [\partial Z_\alpha + \partial Z_{c(\alpha)}] = 0.
\end{equation}
For $i\in I^{0,0}$, consider $\beta$ defined by $\beta_i=0$, $\beta_j =d_j / 2$ if $j\in I^{0,0} - \{i\}$, $\beta_j=0$ if $j\in I^{0,1}$ and $\beta_j=(d_j-1)/2$ if $j\in I^{1,1}$. As explained in the description of $J_i\cap F$ in the proof of Lemma \ref{LemmaImageHomology}, the class $[\partial Z_\beta]$ only appears in $((1+c_*)(B_i))$ among all the terms of equation (\ref{PreuveIndep}). Thus $\lambda_i$ is necessarily $0$. We can then conclude from the independence of the family $\{[\partial Z_\alpha + \partial Z_{c(\alpha)}]\}_{\alpha \in \tilde{\Omega}}$ before the quotient that all $\lambda_\alpha$ are $0$, which proves our point.

Consider $\beta \in \Omega $ defined by $\beta_j =d_j / 2$ if $j\in I^{0,0}$, $\beta_j=0$ if $j\in I^{0,1}$ and $\beta_j=(d_j-1)/2$ if $j\in I^{1,1}$. Since $[\partial Z_\beta]$ appears in $\sum_{\alpha\in \Omega} [\partial Z_\alpha]$ but not in $\sum_{i\in I^{0,0}}\Z_2 \cdot ((1+c_*)(B_i)) + \sum_{\alpha\in \tilde{\Omega}} \Z_2 \cdot [\partial Z_\alpha + \partial Z_{c(\alpha)}]$ (once again going back to the proof of Lemma \ref{LemmaImageHomology}), we see that
$\sum_{\alpha\in \Omega} [\partial Z_\alpha] \notin \sum_{i\in I^{0,0}}\Z_2 \cdot ((1+c_*)(B_i)) + \sum_{\alpha\in \tilde{\Omega}} \Z_2 \cdot [\partial Z_\alpha + \partial Z_{c(\alpha)}]$. This means that the dimension of $\sum_{i\in I^{0,0}}\Z_2 \cdot ((1+c_*)(B_i)) + \sum_{\alpha\in \tilde{\Omega}} \Z_2 \cdot [\partial Z_\alpha + \partial Z_{c(\alpha)}]$ does not decrease when we quotient.
Thus
$$\dim({Im(1+c_*)})=|I^{0,0}| +|\tilde{\Omega}| = |I^{0,0}|+ \frac{|\Omega|-|F|}{2}= |I^{0,0}|+ \frac{d_1\ldots d_n-2^{|I^{0,0}|}}{2},$$
as stated.
\end{proof}

\section{Galois maximality}\label{GaloisMaximality}

We are still using the notations of the previous sections. 



To prove Theorem \ref{MainResult}, we need to compare $\dim_{\Z _2}\left(\frac{Ker(1+c_*)}{Im(1+c_*)}\right)$ (where $1+c_*: H_{*}(\CC_ X)\longrightarrow H_{*}(\CC _X)$) and $ \dim_{\Z _2}(H_*(\R X))$. We consider $H_*(\R X)$ first, with the following lemma. 

\begin{lemma}\label{LemmaRealPartHomology}

The real part $\R X \subset (\R^ *)^n$ consists of $2^n$ contractible connected components if $\delta = \delta(\varepsilon) \in {\Z_2}^n$ is not in the image of $A\cdot:{\Z_2}^n \longrightarrow {\Z_2}^n$, and of $2^n - 2 ^{n-\rank (A)}$ contractible connected components if it is. Thus $H_*(\R X)$ is of $\Z_2$-dimension $2^n$ if $\delta \notin Im(A)$ and of $\Z_2$-dimension $2^n - 2 ^{n-\rank (A)}$ if $\delta \in Im(A)$.
\end{lemma}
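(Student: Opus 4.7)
The plan is to work quadrant by quadrant. Parametrize the quadrants of $(\R^*)^n$ by sign vectors $\eta = (\eta_1,\ldots,\eta_n)\in \Z_2^n$ via $x_i = (-1)^{\eta_i}|x_i|$. Using $\varepsilon_i = (-1)^{\delta_i}$ and noting that $\mathrm{sign}(x^{\alpha^i}) = (-1)^{\eta\cdot \alpha^i} = (-1)^{(A\eta)_i}$ (since $A^i = \alpha^i$), the equation $P(x)=0$ restricted to the quadrant indexed by $\eta$ becomes
$$\sum_{i=1}^n (-1)^{\delta_i + (A\eta)_i} c_i |x|^{\alpha^i} + c_0 = 0, \qquad |x|\in (\R_{>0})^n.$$

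Next, I would linearize using the map $\Phi_A: (\R_{>0})^n \to (\R_{>0})^n$, $r\mapsto r^A$. By the same cofactor argument recalled in Remark \ref{Smoothness}, $\Phi_A\circ \Phi_B = \Phi_{\det(A)\cdot \mathrm{Id}}$, and on positive reals this is the bijection $(r_1,\ldots,r_n)\mapsto (r_1^{\det A},\ldots,r_n^{\det A})$; since $\Delta_P$ is non-degenerate, $\det A\neq 0$, so $\Phi_A$ is a diffeomorphism of $(\R_{>0})^n$ onto itself. Setting $y=|x|^A$, the solution set of $P=0$ in the quadrant $\eta$ is diffeomorphic to
$$\Bigl\{ y\in (\R_{>0})^n \,\Big|\, \sum_{i=1}^n (-1)^{\delta_i+(A\eta)_i} c_i y_i + c_0 = 0 \Bigr\}.$$

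This is the intersection of an affine hyperplane with an open orthant, hence either empty or a non-empty convex set (thus contractible). Since $c_0>0$ and all $c_i>0$, non-emptiness is equivalent to having at least one negative coefficient in front of some $y_i$, which amounts to $\delta_i + (A\eta)_i \equiv 1\pmod 2$ for some $i$, i.e.\ $A\eta \neq \delta$ in $\Z_2^n$. Thus the quadrant indexed by $\eta$ contributes exactly one contractible component of $\R X$ unless $A\eta = \delta$, in which case it contributes nothing.

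Finally I would count: the number of solutions $\eta \in \Z_2^n$ of $A\eta=\delta$ is $0$ if $\delta\notin \mathrm{Im}(A)$, and $|\ker(A)| = 2^{n-\rank(A)}$ if $\delta \in \mathrm{Im}(A)$. Hence $\R X$ consists of $2^n$ contractible components in the first case and of $2^n - 2^{n-\rank(A)}$ in the second, which gives the stated $\Z_2$-dimension of $H_*(\R X)$. There is no real obstacle here; the one point deserving care is checking that $\Phi_A$ is indeed a diffeomorphism of $(\R_{>0})^n$ (so that the substitution $y=|x|^A$ induces a bijection between the solution sets), and that a non-empty intersection of a hyperplane with the positive orthant is always contractible.
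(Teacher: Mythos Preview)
Your proposal is correct and follows essentially the same approach as the paper: work quadrant by quadrant, use the diffeomorphism $\Phi_A$ (the paper uses its inverse $\Phi_{A^{-1}}$ with rational exponents) on $(\R_{>0})^n$ to reduce to an affine hyperplane in the positive orthant, and observe that this intersection is empty precisely when all the signs $(-1)^{\delta_i+(A\eta)_i}$ are $+1$, i.e.\ $A\eta=\delta$. The counting of solutions and the conclusion are identical.
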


\begin{proof}
Consider the $2^n$ quadrants of $(\R^ *)^n$, and let $\gamma=(\gamma_1,\ldots, \gamma_n ) \in \{1,-1 \}^n$ index the quadrant $Q_\gamma := \{(x_1,\ldots,x_n) \in  (\R^ *)^n | x_1 \gamma_1 >0, \ldots, x_n \gamma_n >0 \}$. Consider as in Section \ref{Coamoebas} the real polynomial  $P_L(z) = \sum _{i=1,\ldots,n}  \varepsilon_i c_{i} z_i + c_0$ associated to $P(z) = \sum _{i=1,\ldots,n}  \varepsilon_i c_{i} z^{A_i} +  c_0$. Then the map
\begin{align*}
   \Phi _{A^{-1}} : \quad  Q_{(1,\ldots,1)}  & \to  Q_{(1,\ldots,1)} \\
    z & \mapsto  z^{A^{-1}},
\end{align*}
where we extend the notation to include rational exponents, is a well defined homeomorphism that maps $\R X_{P_L} \cap  Q_{(1,\ldots,1)}$ to  $\R X \cap  Q_{(1,\ldots,1)}$. In particular, $\R X \cap  Q_{(1,\ldots,1)}$ is empty if $\varepsilon_i=1$ for all $i\in \{1,\ldots,n \}$, and is non-empty and contractible otherwise.
Now observe that $\R X \cap  Q_{\gamma}$ is isomorphic to $\R X_{P_ \gamma} \cap  Q_{(1,\ldots,1)}$, where $P_\gamma (z) := \sum _{i=1,\ldots,n}  \varepsilon_i \gamma^{A_i} c_{i} z^{A_i} +  c_0$. Since $ \delta (\varepsilon_1 \gamma^{A_1}, \ldots, \varepsilon_n \gamma^{A_n})= \delta(\varepsilon) + A\cdot \delta(\gamma)$, we see that $\R X \cap  Q_{\gamma}$ is empty if $ \delta(\varepsilon) = A\cdot \delta(\gamma)$, and non-empty and contractible otherwise.

\end{proof}

The next lemma links the conditions of Theorem \ref{MainResult}, Lemma \ref{LemmaImageHomology} and Proposition \ref{PropositionRank}. 
\begin{lemma}\label{LemmaEquivalenceConditions}
 The real part $\R X$ intersects non-trivially each quadrant of $(\R^*)^n$ if and only if  $\delta \notin Im(A)$ if and only if $|I^{1,0}| \neq 0$.
\end{lemma}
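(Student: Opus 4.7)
The proof is essentially a chain of straightforward translations between the three formulations; the main work is to carefully unwind the Smith normal form relation $A = G^{-1} D$ (with $H = I$) modulo $2$.

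The plan is to handle the two equivalences separately. For the first equivalence, $\R X$ meets every quadrant non-trivially iff $\delta \notin \mathrm{Im}(A)$ (mod $2$), I would invoke Lemma \ref{LemmaRealPartHomology} directly: the proof of that lemma shows that for $\gamma \in \{\pm 1\}^n$, the intersection $\R X \cap Q_\gamma$ is empty precisely when $\delta = A\cdot \delta(\gamma) \in \Z_2^n$. Since $\delta: \{\pm1\}^n \to \Z_2^n$ is a bijection, quantifying over all $\gamma$ shows that some quadrant is missed iff $\delta \in A\cdot \Z_2^n = \mathrm{Im}(A)$, giving the first equivalence immediately.

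For the second equivalence, $\delta \notin \mathrm{Im}(A)$ iff $|I^{1,0}| \neq 0$, I would use the Smith normal form relation established in Section \ref{explicitDescription}: $A = G^{-1} D$. Since $G \in GL_n(\Z)$, its determinant is $\pm 1$, so $G$ (and hence $G^{-1}$) is invertible modulo $2$. Consequently, reducing everything mod $2$,
\[
\mathrm{Im}(A : \Z_2^n \to \Z_2^n) = G^{-1} \cdot \mathrm{Im}(D : \Z_2^n \to \Z_2^n),
\]
so $\delta \in \mathrm{Im}(A)$ iff $G\delta = \delta^G \in \mathrm{Im}(D)$.

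The final step is to describe $\mathrm{Im}(D)$ mod $2$ coordinatewise: since $D$ is diagonal with entries $d_i$, a vector $w \in \Z_2^n$ lies in $\mathrm{Im}(D)$ iff $w_i = 0$ for every $i$ with $d_i$ even (the odd coordinates are unconstrained since $d_i$ is then a unit mod $2$). Applied to $w = \delta^G$, this reads: $\delta^G \in \mathrm{Im}(D)$ iff there is no index $i$ with $d_i \equiv 0 \pmod{2}$ and $\delta^G_i \equiv 1 \pmod{2}$, that is, iff $I^{1,0} = \emptyset$. Taking the contrapositive yields the desired equivalence. I do not anticipate a serious obstacle here; the only subtlety to highlight is the invertibility of $G$ mod $2$, which justifies passing freely between $\mathrm{Im}(A)$ and $\mathrm{Im}(D)$ in $\Z_2^n$.
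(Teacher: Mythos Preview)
Your proposal is correct and follows essentially the same argument as the paper: the first equivalence is read off from Lemma~\ref{LemmaRealPartHomology}, and the second is obtained by using $A=G^{-1}D$ to transfer $\delta\in\mathrm{Im}(A)$ to $\delta^G\in\mathrm{Im}(D)$ and then describing $\mathrm{Im}(D)$ coordinatewise modulo $2$. Your explicit remark that $G$ is invertible modulo $2$ is the one point the paper leaves implicit.
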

\begin{proof}
The first equivalence comes from Lemma \ref{LemmaRealPartHomology}. Then
\begin{align*}
\delta \in Im(A) \iff \exists x \in {\Z_2}^n \: s.t. \: A\cdot x=G^{-1}\cdot D \cdot x=\delta \iff  \\
\exists x \in {\Z_2}^n \: s.t. \: D\cdot x=G\cdot \delta= \delta ^G
\iff \delta ^G \in Im(D) \iff \\
(\delta ^G)_i = 0 \: \forall i \: s.t. \: d_i= [0]_2
\iff |I^{1,0}| = 0.
\end{align*}
\end{proof}

We are now ready to prove the main result.
\begin{proof}[Proof of Theorem \ref{MainResult}]

As mentioned at the beginning of Subsection \ref{SubsectionHomologyCoamoeba}, $1+c_*: H_{k}(\CC_ X)\longrightarrow H_{k}(\CC _X)$ is trivial for $k\neq n-1$.
Moreover, we know that $H_k(\CC_X)$ is isomorphic to $H_k(T^n)$ for $k\neq n-1$, and that $dim(H_{n-1}(\CC_X))=n + d_1\ldots d_n -1$ (as is shown by Lemma \ref{descriptionHomologie}).

Therefore, from Proposition \ref{PropositionRank} and Lemmas \ref{LemmaRealPartHomology} and \ref{LemmaEquivalenceConditions}, we see that if $\R X$ intersects non-trivially each quadrant of $(\R^*)^n$,
\begin{align*}
\dim \left(\frac{Ker(1+c_*)}{Im(1+c_*)}\right) = \dim(H_*(\C X))-2\dim (Im(1+c_*)) =\\
2^n-1+(d_1\ldots d_n -1) -2( \frac{d_1\ldots d_n} {2}-1)=2^n= \dim(H_*(\R X)). 
\end{align*}
Hence $X$ is Galois maximal.

Otherwise, we have by definition of the sets $I^{\pm 1, \pm 1}$ in Section \ref{Coamoebas} that $\rank _ {\Z _2}(\Delta _ P) = \rank _ {\Z _2}(A)= |I^{0,1}|+|I^{1,1}| = n- |I^{0,0}|$ (since $|I^{1,0}| = 0)$ and
\begin{align*}
\dim \left(\frac{Ker(1+c_*)}{Im(1+c_*)}\right) = \dim(H_*(\C X))-2\dim (Im(1+c_*)) =\\
2^n-1+(d_1\ldots d_n -1) -2( |I^{0,0}| +\frac{d_1\ldots d_n - 2^{|I^{0,0}|}} {2})=\\
2^n-2-2(n-\rank _{\Z_2}(A)) + 2^{n-\rank _{\Z_2}(A)},
\end{align*}
hence 
\begin{flalign*}
& \dim \left(\frac{Ker(1+c_*)}{Im(1+c_*)}\right) -\dim (H_*(\R X)) = & \\
 & 2^n-2-2(n-\rank _{\Z_2}(A)) + 2^{n-\rank _{\Z_2}(A)} -(2^n - 2 ^{n-\rank (A)}) = &\\
& 2(2^{n-\rank _{\Z_2}(A)}-1- (n-\rank _{\Z_2}(A))) &.
\end{flalign*}

This ends the proof.
\end{proof}

\bibliographystyle{alpha}
\bibliography{biblio}

\end{document}